\newenvironment{proof}{\noindent {\bf Proof:}}{\hfill $\Box$}
\newtheorem{theorem}{Theorem}
\newtheorem{lemma}{Lemma}
\newtheorem{corollary}{Corollary}
\title{\bf Inner approximations of the region of attraction for polynomial dynamical systems}
\begin{document}

\author{Milan Korda$^1$, Didier Henrion$^{2,3,4}$, Colin N. Jones$^1$}

\footnotetext[1]{Laboratoire d'Automatique, \'Ecole Polytechnique F\'ed\'erale de Lausanne, Station 9,
CH-1015, Lausanne, Switzerland. {\tt \{milan.korda,colin.jones\}@epfl.ch}}
\footnotetext[2]{CNRS, LAAS, 7 avenue du colonel Roche, F-31400 Toulouse; France. {\tt henrion@laas.fr}}
\footnotetext[3]{Universit\'e de Toulouse, LAAS, F-31400 Toulouse; France}
\footnotetext[4]{Faculty of Electrical Engineering, Czech Technical University in Prague,
Technick\'a 2, CZ-16626 Prague, Czech Republic}

\date{ \today}

\maketitle

\begin{abstract}
In a previous work we developed a convex infinite dimensional linear programming (LP) approach
to approximating the region of attraction (ROA) of polynomial dynamical systems subject to compact
basic semialgebraic state constraints. Finite dimensional relaxations to the infinite-dimensional LP
lead to a truncated moment problem in the primal and a polynomial sum-of-squares problem in the dual.
This primal-dual linear matrix inequality (LMI) problem can be solved numerically with standard
semidefinite programming solvers, producing a hierarchy of outer (i.e. exterior) approximations
of the ROA by polynomial sublevel sets, with a guarantee of almost uniform and set-wise convergence.
In this companion paper, we show that our approach is flexible enough to be modified so as to generate
a hierarchy of polynomial inner (i.e.\,interior) approximations of the ROA with similar convergence
guarantees.
\end{abstract}

\section{Introduction}

\setcounter{footnote}{4}

Given an autonomous nonlinear system and a target set, the region of attraction (ROA)
is the set of all states that end in the target set at a given time without leaving
the state constraint set\footnote{There are various modifications on this setup
(e.g., one may consider asymptotic convergence instead of finite-time reachability,
with or without constraints, in the presence of disturbances and/or uncertainty,
or in a controlled setting); most of these modifications are amenable to the methods
presented in this paper, sometimes with different qualitative results.}.
The ROA is one of the principal sets associated to any dynamical system and goes
by many other names in the literature (e.g., backward reachable set or capture
basin~\cite{aubinViable}).

In~\cite{roa} we showed (in a controlled setting) that there is a genuinely primal
convex characterization of the ROA. Optimization over system trajectories is formulated
as optimization over occupation measures, leading to an infinite dimensional linear programming (LP)
problem in the cone of nonnegative measures. Finite dimensional relaxations of the dual
of this problem then provide a converging sequence of \emph{outer} approximations to the ROA.
For a description of alternative techniques for numerical approximations of the ROA,
please consult \cite{chesi} or \cite{roa} and the many references therein.

In this paper we show, within the same measure-theoretic framework, that there exists an infinite dimensional LP whose finite-dimensional relaxations provide a converging sequence of \emph{inner} approximations to the ROA. This paper can therefore be seen as a complement to~\cite{roa}.
To simplify our developments and to emphasize our contribution,
we focus only on the uncontrolled setting. The main idea is to construct a converging sequence of outer approximations to the \emph{complement} of the ROA. There are certain difficulties, topological in nature, associated with this approach. A careful distinction had to be made between trajectories leaving the constraint set and trajectories hitting its boundary. This then translates to a (sometimes subtle, but necessary) distinction
between open and closed semialgebraic sets. Fortunately, the LP formulation proposed in \cite{roa}
was flexible enough to allow for these modifications.

Generally speaking, and consistently with our previous work \cite{roa}, we believe that
the main virtues of our approach are overall convexity, conceptual simplicity and compactness.
Both primal and dual finite-dimensional relaxations turn out to be linear matrix inequalities (LMI),
also called semidefinite programming (SDP) problems, with no tuning parameters besides
the relaxation order and no initialization data besides
the defining ingredients of the problem. In addition, the inner approximations obtained
are particularly simple -- they are given by a sublevel set of a single polynomial of a predefined degree.
Therefore, an ROA approximation in analytic form can be readily obtained by solving a single LMI
using freely available software (e.g., SeDuMi~\cite{sedumi}).

\section{Problem statement}
Consider the autonomous system
\begin{equation}\label{sys}
	\dot{x}(t) = f(t,x(t)), \quad x(t) \in X\subset\mathbb{R}^n, \quad t \in [0,T]
\end{equation}
with a given vector field $f$ with polynomial entries $f_i\in {\mathbb R}[t,x]$, $i=1,\ldots,n$, 
final time $T>0$. The state trajectory $x(\cdot)$ is constrained to
a nonempty open\footnote{The requirement of the constraint set being open is merely technical, for
this considerably simplifies the developments and the proofs.} basic semialgebraic set\footnote{For clarity of exposition we consider the constraint set given by a single superlevel set of a polynomial. The approach can, however, be straightforwardly extended to constraint sets defined by the intersection of 
finitely many polynomial superlevel sets.}
\begin{equation}\label{con}
X := \{x \in {\mathbb R}^n \: :\: {g_X}(x) > 0\},
\end{equation}
where the polynomial ${g_X} \in {\mathbb R}[x]$ is such that the set
\[
\bar{X} := \{x \in {\mathbb R}^n \: :\: {g_X}(x) \ge 0\} \supset X
\]
is compact\footnote{Note that the closed semialgebraic set 
$\bar{X} = \{x\: :\: g_X(x) \geq 0\}$ can be strictly larger than the closure
of the open semialgebraic set $X = \{x\: :\: g_X(x)>0\}$, consider in $\mathbb R$ e.g.
$g_X(x)=(1-x^2)(2+x)^2$.
For a similar reason, note also that $X$ bounded does not imply $\bar{X}$ bounded. Indeed, 
in ${\mathbb R}^2$ with $g_X(x) = (1-x^2_1-x^2_2)(2+x_1)^2$ we have $X =  \{x\::\: ||x|| < 1\}$ and $\bar{X} =  \{x\::\: ||x|| \le 1\}\cup \{x\::\: x_1=-2 \}$.}.

The vector field $f$ is polynomial and therefore Lipschitz on the compact set $\bar{X}$. As a result,  for any $x_0\in \bar{X}$ there exists a unique maximal solution $x(\cdot)$ to ODE~(\ref{sys}).
The time interval on which this solution is defined contains the time interval on which $x(t)\in \bar{X}$.

\subsection{Region of attraction (ROA)}
Given a final time $T$ and an open bounded basic semialgebraic target set
\[
X_T := \{x \in {\mathbb R}^n \: :\: {g_T}(x) > 0\} \subset X,
\]
the region of attraction (ROA) is defined as
\begin{equation}\label{eq:roa}
	X_0 := \Big\{x_0\in X \: : \: \exists\, x(\cdot)\; \mathrm{s.t.}\; \dot{x}(t) = f(t,x(t)),\; x(0)=x_0,\; x(T)\in X_T,  \:x(t)\in X\:\:, \forall t\,\in [0,T] \Big\}.
\end{equation}
In words, the ROA is the set of all initial states from $X$ for which the unique solution to~(\ref{sys}) stays in $X$ for all $t\in [0,T]$ and ends in the target set at time $T$.

%
%

\subsection{Complement ROA}
The idea to get \emph{inner} approximations of the ROA $X_0$ is to construct \emph{outer} approximations of the complement ROA $X_0^c := X\setminus X_0$. By continuity of solutions to~(\ref{sys}), the set $X_0^c$ is equal to
\begin{align*}
X_0^c = \big\{x_0\in X \: : \: &\exists\,x(\cdot)\;\text{s.t.}\; \dot{x}(t)=f(t,x(t))\; \text{and}\;\\
 & \exists\, t\in [0,T]\; \mathrm{s.t.}\; x(t)\in X_\partial \ \text{and/or}\ x(T)\in X_T^c\big\},
\end{align*}
where
\[
 X_T^c := \{x\in\mathbb{R}^n \: : \: g_X(x)\ge 0,\; g_T(x)\le0\}
\]
is the complement of $X_T$ in $X$ and
\[
 X_\partial := \{x\in\mathbb{R}^n\::\: g_X(x) = 0 \}.
\]
In words, $X_0^c$ is the set of initial states that give rise to trajectories which do not end up in $X_T$ at time $T$ \emph{and/or} violate the state constraint at some point between $0$ and $T$.

\section{Occupation measures}
In this section we introduce the concept of occupation measures and show how the nonlinear system dynamics can be equivalently described by a linear equation on measures.

\paragraph{Notation} We will use the following notation. The vector space of all signed Borel measures with support contained in a Borel set $K$ is denoted by $M(K)$. The support (i.e., the smallest closed set whose complement has a zero measure) of a measure $\mu$ is denoted by $\mathrm{spt}\,\mu$. The space of continuous functions on $K$ is denoted by $C(K)$ and likewise the space of continuously differentiable functions is $C^1(K)$. The indicator function of a set $K$ (i.e., the function equal to one on $K$ and zero otherwise) is denoted by $I_K(\cdot)$. The symbol $\lambda$ denotes the $n$-dimensional Lebesgue measure (i.e.,  the standard $n$-dimensional volume). The integral of a function $v$ w.r.t a measure $\mu$ over a set $K$ is denoted by $\int_K v(x)\,d\mu(x)$. Sometimes we for simplicity omit the integration variable and/or the set over which we integrate if they are obvious from the context.
 
Now assume $x_0\in \bar{X}$
and define the first hitting time of $X_\partial$ as
\begin{equation}\label{eq:tau}
\tau(x_0) := \min\big\{T, \inf\{t\ge 0 \: : \: x(t\mid x_0)\in X_\partial  \}\big\},
\end{equation}
where $x(\cdot\mid x_0)$ denotes the unique trajectory starting from $x_0$ (which is well defined on the time interval $[0,\tau(x_0)]$). Then we define the \emph{occupation measure} associated to the trajectory starting from $x_0$ by
\[
\mu(A\times B \mid x_0) := \int_0^{\tau(x_0)}\hspace{-0.5em}I_{A\times B}(t,x(t))\,dt
\]
for all Borel\footnote{For brevity we drop the adjective ``Borel'' in the sequel.} sets $A\times B\subset [0,T]\times \bar{X}$.  The interpretation is that the occupation measure measures the time spent by the trajectory $x(\cdot\mid x_0)$ in subsets of the state space.

The occupation measure enjoys the following important property: for any measurable function $v(t,x)$ the equality
\begin{equation}\label{eq:occupProperty}
\int_0^{\tau(x_0)} v(t,x(t))\,dt = \int_{[0,T]\times \bar{X}} \hspace{-1em}v(t,x)\,d\mu(t,x\mid x_0)
\end{equation}
holds. In words, the time integral of a function evaluated along the trajectory $x(\cdot \!\mid\! x_0)$ is equal to the integral of the function w.r.t. the occupation measure associated to $x_0$. Therefore, loosely speaking, all information about the trajectory $x(\cdot \!\mid\! x_0)$ is encoded by the occupation measure $\mu(\cdot\mid x_0)$.

Now suppose that the initial state is not a single point but that its spatial distribution is given by an \emph{initial measure} $\mu_0\in M(\bar{X})$. Then we define the \emph{average occupation measure} $\mu\in M([0,T]\times\bar{X})$ as
\[
\mu(A\times B) := \int_{\bar{X}}\mu(A\times B\mid x_0)\,d\mu_0(x_0).
\]
Lastly, we define the final measure $\mu_T\in M([0,T]\times \bar{X})$ by
\[
\mu_T(B) := \int_{\bar{X}} I_B(x(T\mid x_0))\,d\mu_0(x_0).
\]

To derive an equation linking the three principal measures, consider a test function $v\in C^1([0,T]\times \bar{X})$ evaluated along a trajectory. Using the chain rule and equation~(\ref{eq:occupProperty}) we obtain
\begin{align*}
v\big(\tau(x_0),\, x(\tau(x_0)\mid x_0)\big) - v(0,x_0) &= \int_0^{\tau(x_0)} \frac{d}{dt}v(t,x(t\mid x_0))\,dt \\ 
&= \int_0^{\tau(x_0)} \left(\frac{\partial{v}}{\partial t} + \mathrm{grad}\, v\cdot f(t,x(t\mid x_0))\right)\,dt\\
& = \int_{[0,T]\times\bar{X}} \left(\frac{\partial{v}}{\partial t} + \mathrm{grad}\, v\cdot f(t,x)\right)\,d\mu(t,x\mid x_0)\\
& = \int_{[0,T]\times\bar{X}} \mathcal{L}v(t,x)\,d\mu(t,x\mid x_0)
\end{align*}
where the linear operator $\mathcal{L}:C^1([0,T]\times\bar{X})\to C([0,T]\times\bar{X})$ is defined by
\begin{equation*}
v\mapsto \mathcal{L}v:= \frac{\partial v}{\partial t} + \mathrm{grad}\,v\cdot f.
\end{equation*}
Integrating the above equation w.r.t. $\mu_0$ leads to the equation
\begin{equation}\label{eq:Liouville}
\int_{[0,T]\times \bar{X}}\hspace{-1em}v(t,x)\,d\mu_T(t,x) - \int_{\bar{X}}v(0,x)\,d\mu_0(x) = \int_{[0,T]\times\bar{X}} \hspace{-1em}\mathcal{L}v(t,x)\,d\mu(t,x)\quad \forall\: v\in C^1([0,T]\times\bar{X}),
\end{equation}
which is a linear equation linking the measures $\mu_0$, $\mu$ and $\mu_T$. Equation~(\ref{eq:Liouville}) is sometimes referred to as Liouville's equation.

\section{Primal LP}
In this section we follow the approach developed in~\cite{roa} and derive an infinite-dimensional linear programming (LP) characterization of the complement ROA $X_0^c$. Certain sublevel sets of feasible solutions to the dual of this LP then yield inner approximations to the ROA $X_0$.

The basic idea is to maximize the mass of the initial measure $\mu_0$
under the constraint that it is dominated by the Lebesgue measure, i.e., $\mu_0 \le \lambda$. System dynamics is captured by Liouville's equation~(\ref{eq:Liouville}) and state and terminal constraints
are handled by suitable constraints on the support of the measures. The key idea is then to split the final measure in two measures such that each measure is supported on a suitable compact basic semialgebraic set. More explicitly, we let
\[
\mu_T := \mu_T^1 + \mu_T^2
\] 
with $\mu_T^1\in M([0,T]\times X_\partial)$ and $\mu_T^2\in M(\{T\}\times X_T^c)$. That is, we require that $\mathrm{spt}\,\mu_T^1\subset [0,T]\times X_\partial$ and $\mathrm{spt}\,\mu_T^2\subset \{T\}\times X_T^c$. The interpretation is that measure $\mu_T^1$ models the trajectories that leave $X$, whereas measure $\mu_T^2$ models the trajectories that end in $X_T^c$ (i.e., not in $X_T$). These support constraints on the final measure(s) along with system dynamics enforce that the support of the initial measure $\mu_0$ must be contained in $X_0^c$. Since there are no other constraints on $\mu_0$ besides $\mu_0\le \lambda$, maximization of its mass should yield the restriction of the Lebesgue measure $\lambda$ to $X_0^c$. 

The constraint $\mu_0 \le \lambda$ can be rewritten equivalently as $\mu_0 + \hat{\mu}_0 = \lambda$ for some nonnegative slack measure $\hat{\mu}_0\in M(X)$. This is equivalent to requiring that $ \int w \,d\mu_0 + \int w \,d\hat{\mu}_0 = \int w\,d\lambda$ for all test functions $w\in C(\bar{X})$. In addition, we can drop the time argument from the definition of $\mu_T^2$ since its time component is supported on a singleton.  

This leads to the following optimization problem:
\begin{equation}\label{rrlp}
\begin{array}{rclll}
 p^* &= & \sup & \int1\,d\mu_0 \\
&& \mathrm{s.t.} & \int v\,d\mu_T^1 + \int v(T,\cdot)\,d\mu_T^2 - \int v(0,\cdot)\,d\mu_0 = \int \mathcal{L}v\, d\mu\; &\forall\, v\in C^1([0,T]\times\bar{X}) \\
&&& \int w \,d\mu_0 + \int w \,d\hat{\mu}_0 = \int w\,d\lambda\; &\forall\,w\in C(\bar{X}) \\
&&& \mu_0\geq 0, \: \mu\geq 0,\: \mu_T^1\geq 0,\: \mu_T^2\geq 0,\:\hat{\mu}_0\geq 0\\
&&& \mathrm{spt}\:\mu \subset [0,T]\times \bar{X}, \:\: \mathrm{spt}\:\mu_0 \subset \bar{X},\:\: \mathrm{spt}\:\hat{\mu}_0 \subset \bar{X}\\
&&& \mathrm{spt}\:\mu_T^1 \subset [0,T]\times X_\partial , \:\:\mathrm{spt}\:\mu_T^2 \subset X_T^c,
\end{array}
\end{equation}
where the supremum is over the vector of nonnegative measures \[
(\mu_0,\mu,\mu_T^1,\mu_T^2,\hat{\mu}_0) \in  M(\bar{X})\times M([0,T]\times \bar{X}) \times M([0,T]\times X_\partial)\times M(X_T^c)\times M(\bar{X}).
\] 
Problem~(\ref{rrlp}) is an infinite-dimensional LP in the cone of nonnegative measures. Indeed, the objective is linear, the first two constraints are linear equality constraints and the remaining constraints are conic constraints (the set of nonnegative measures supported on a given set is a positive cone in the vector space of all measures supported on the same set).

The discussion leading to problem~(\ref{rrlp}) is formalized in the following result.
\begin{theorem}\label{thm:1}
The optimal value of LP problem (\ref{rrlp}) is equal to the volume of the complement ROA $X_0^c$, that is, $p^*=\lambda(X_0^c)$.
Moreover, the supremum is attained by the restriction of the Lebesgue measure to the complement ROA $X_0^c$.
\end{theorem}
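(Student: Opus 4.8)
The plan is to prove the two inequalities $p^* \ge \lambda(X_0^c)$ and $p^* \le \lambda(X_0^c)$ separately, and to exhibit along the way the feasible point attaining the optimum.

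First I would establish $p^* \ge \lambda(X_0^c)$ by constructing an explicit feasible tuple. Take $\mu_0 := \lambda|_{X_0^c}$, the restriction of Lebesgue measure to the complement ROA, and $\hat\mu_0 := \lambda|_{X_0}$, so that the second (domination) constraint $\int w\,d\mu_0 + \int w\,d\hat\mu_0 = \int w\,d\lambda$ holds trivially for all $w \in C(\bar X)$. For the remaining measures I would use the occupation-measure machinery of Section~3: for each $x_0 \in X_0^c$ the trajectory $x(\cdot \mid x_0)$ is, by definition of $X_0^c$, defined up to the first hitting time $\tau(x_0)$ and at that time either lands in $X_\partial$ (if $\tau(x_0) < T$, or more generally if $x(\tau(x_0)\mid x_0) \in X_\partial$) or reaches $\{T\}\times X_T^c$. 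Define $\mu$ as the average occupation measure generated by $\mu_0$, and split the final measure accordingly: $\mu_T^1$ is the part of $\mu_T$ carried by $[0,T]\times X_\partial$ and $\mu_T^2 = \mu_T^2(\cdot) := \mu_T|_{X_T^c}$ the part at time $T$ in $X_T^c$. Then Liouville's equation~(\ref{eq:Liouville}), derived in Section~3, is exactly the first constraint of~(\ref{rrlp}), all support constraints hold by construction, and $\int 1\,d\mu_0 = \lambda(X_0^c)$. Some care is needed to check measurability of $x_0 \mapsto \tau(x_0)$ and of $x_0 \mapsto x(\tau(x_0)\mid x_0)$ so that the average occupation and final measures are well defined Borel measures; this follows from continuity of $f$ and standard ODE regularity, and is where the ``open constraint set'' hypothesis pays off.

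Next, for $p^* \le \lambda(X_0^c)$, I would argue that \emph{any} feasible tuple $(\mu_0,\mu,\mu_T^1,\mu_T^2,\hat\mu_0)$ satisfies $\int 1\,d\mu_0 \le \lambda(X_0^c)$. The domination constraint forces $\mu_0 \le \lambda$ as measures on $\bar X$ (testing against nonnegative $w$ and using $\hat\mu_0 \ge 0$), so it suffices to show $\mathrm{spt}\,\mu_0 \subset \overline{X_0^c}$ up to a $\lambda$-null set, or more precisely that $\mu_0(X_0) = 0$. Here I would use a disintegration / superposition argument: by the support constraints, $\mu$ is supported on $[0,T]\times\bar X$, $\mu_T^1$ on trajectories hitting $X_\partial$, and $\mu_T^2$ on $X_T^c$ at time $T$; invoking a superposition principle (in the spirit of Ambrosio or the constructions in~\cite{roa}) the Liouville equation lets one represent $\mu$, $\mu_0$, $\mu_T^1$, $\mu_T^2$ as averages over a family of admissible trajectory pieces, each starting at a point of $\mathrm{spt}\,\mu_0$ and terminating either on $X_\partial$ before or at $T$, or in $X_T^c$ at $T$. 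Every such starting point lies in $X_0^c$ by definition of the complement ROA, hence $\mu_0$ is carried by $X_0^c$ and $\int 1\,d\mu_0 = \mu_0(X_0^c) \le \lambda(X_0^c)$.

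The main obstacle is the topological subtlety flagged in the introduction: distinguishing trajectories that \emph{leave} $X$ from those that merely \emph{touch} $X_\partial$, and correspondingly handling the mismatch between the closed set $\bar X$ (on which measures must be supported, for compactness) and the open set $X$ (in terms of which $X_0$ and $X_0^c$ are defined). Concretely, one must ensure that a trajectory reaching $X_\partial$ genuinely corresponds to a point of $X_0^c$ (it does, since such a point fails $x(t)\in X$ for all $t$), and conversely that the constructed $\mu_T^1$ really is supported on $[0,T]\times X_\partial$ and not on some larger closed set. I expect the cleanest route is to push all trajectory-level arguments through the first hitting time $\tau(x_0)$ and the compact set $\bar X$, then translate back to $X$ and $X_0^c$ at the level of measures, where the open/closed distinction only costs a $\lambda$-null boundary set and hence does not affect the volume.
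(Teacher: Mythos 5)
Your proposal is correct and follows essentially the same route as the paper: the lower bound via the explicit feasible point built from the restriction of $\lambda$ to $X_0^c$ together with its induced occupation and final measures, and the upper bound via the Ambrosio-type superposition principle combined with $\mu_0\le\lambda$ and the fact that the boundary $X_\partial$ is Lebesgue-null. The only cosmetic differences are that you spell out the construction of $\hat\mu_0$ and the splitting of $\mu_T$ more explicitly, while the paper is slightly more explicit about discarding the null set $\mathrm{spt}\,\mu_0\cap X_\partial$ in the upper bound.
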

\begin{proof} Closely follows arguments in \cite{roa}. By definition of the relaxed complement ROA, the unique trajectory $x(\cdot)$ associated to any initial condition $x_0\in X_0^c$ either hits $X_\partial$ at some $t\in [0,T]$ or ends in $X_T^c$. Therefore for any initial measure $\mu_0 \le \lambda$ with $\mathrm{spt}\,\mu_0 \subset X\subset \bar{X}$ there exist an occupation measure $\mu$, final measures $\mu_T^1$, $\mu_T^2$ and a slack measure $\hat{\mu}_0$ such that the constraints of problem~(\ref{rrlp}) are satisfied. One such measure $\mu_0$ is the restriction of the Lebesgue measure to $X_0^c$, and therefore $p^*\ge \lambda(X_0^c)$.

Now we show that $p^* \le \lambda(X_0^c)$. Take a vector of measures $(\mu_0,\mu,\mu_T^1,\mu_T^2,\hat{\mu}_0)$ feasible in~(\ref{rrlp}) and suppose that $\lambda(\mathrm{spt}\,\mu_0 \setminus X_0^c) > 0$. Since any level set of a polynomial has a zero Lebesgue measure we have $\lambda(X_\partial)=0$ and
 \[ 
 \lambda(\mathrm{spt}\,\mu_0 \setminus (X_0^c\cup X_\partial)) =  \lambda(\mathrm{spt}\,\mu_0 \setminus X_0^c) > 0.
 \]
By a superposition principle~\cite[Theorem~3.2]{ambrosio} using arguments of \cite[Appendix~A, Lemma~4]{roa}, there exists a family of admissible trajectories of the ODE~(\ref{sys}) starting from $\mu_0$ generating the occupation measure $\mu$ and the final measure $\mu_T = \mu_T^1+\mu_T^2$. However, this is a contradiction since $\mathrm{spt}\,\mu_0 \setminus (X_0^c\cup X_\partial)\subset X_0$, which means that all trajectories starting from $\mathrm{spt}\,\mu_0 \setminus (X_0^c\cup X_\partial)$ neither hit $X_\partial$ nor end in $\bar{X}_T^c$. Thus, $\lambda(\mathrm{spt}\,\mu_0 \setminus X_0^c) = 0$ and so $\lambda(\mathrm{spt}\,\mu_0) \le \lambda(X_0^c)$. Combining this with the constraint $\mu_0 \le \lambda$ we get $\mu_0(X) = \mu_0(\mathrm{spt}\, \mu_0) \le \lambda(\mathrm{spt}\, \mu_0) \le \lambda(X_0^c)$ for any feasible $\mu_0$.  Therefore $p^*\le \lambda(X_0^c)$ and thus in fact $p^* = \lambda(X_0^c)$.
\end{proof}


\section{Dual LP}
In this section we derive a dual LP on continuous functions, prove the absence of a duality gap between the primal and dual LPs and relate feasible solutions to the dual to the indicator function of the complement ROA $X_0^c$.

By standard infinite-dimensional LP theory (see, e.g., \cite{anderson}), the dual to LP~(\ref{rrlp}) reads
\begin{equation}\label{vlp}
\begin{array}{rclll}
d^* & = & \inf & \int_{X} w(x)\, d\lambda(x) \\
&& \mathrm{s.t.} & \mathcal{L}v(t,x) \leq 0, \:\: &\forall\, (t,x,u) \in [0,T]\times \bar{X} \\
&&& w(x) \ge v(0,x) + 1, \:\: &\forall\, x \in \bar{X} \\
&&& v(T,x) \geq 0, \:\: &\forall\, x \in X_T^c\\
&&& v(t,x) \geq 0,\:\: & \forall (t,x)\in[0,T]\times X_\partial\\
&&& w(x) \geq 0, \:\: &\forall\, x \in \bar{X},
\end{array}
\end{equation}
where the infimum is over $(v,w) \in C^1([0,T]\times \bar{X})\times C(\bar{X})$. 

The intuition is that given $x_0\in X_0^c$ the constraint $\mathcal{L}v\le 0$ forces $v$ to decrease along trajectories as long as it does not hit $X_\partial$ or end in $X_T^c$. Because of the constraint $v\ge 0$ on $[0,T]\times X_\partial \cup \{T\}\times X_T^c$ we must have $v(0,\cdot)\ge 0$ on $X_0^c$. Consequently, $w(x) \ge 1$ on $X_0^c$. This instrumental observation is formalized in the following Lemma.

\begin{lemma}\label{lem:v0}
If $\mathcal{L}v \leq 0$ on $[0,T]\times \bar{X}$, $v \ge 0$ on $([0,T]\times X_\partial)\cup(\{T\}\times X_T^c)$ and $w \ge v(0,\cdot)+1$ on $X$, then $w \geq 1$ on $X_0^c$.
\end{lemma}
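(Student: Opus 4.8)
The plan is to reduce the statement to the inequality $v(0,x_0)\ge 0$ for every $x_0\in X_0^c$: once this is known, the hypothesis $w\ge v(0,\cdot)+1$ on $X\supset X_0^c$ immediately gives $w(x_0)\ge v(0,x_0)+1\ge 1$. So fix $x_0\in X_0^c\subset X$ and let $x(\cdot\mid x_0)$ be the associated trajectory, which is well defined and stays in $\bar X$ on $[0,\tau(x_0)]$, with $\tau(x_0)$ the first hitting time of $X_\partial$ (capped at $T$) defined in~(\ref{eq:tau}). Along this single trajectory the same chain-rule computation used to derive Liouville's equation~(\ref{eq:Liouville}) shows that the scalar function $\phi(t):=v(t,x(t\mid x_0))$ satisfies $\phi'(t)=\mathcal{L}v(t,x(t\mid x_0))\le 0$ on $[0,\tau(x_0)]$; hence $\phi$ is nonincreasing and
\[
v(0,x_0)=\phi(0)\ \ge\ \phi(\tau(x_0))\ =\ v\bigl(\tau(x_0),\,x(\tau(x_0)\mid x_0)\bigr).
\]

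It then remains to check that $v\bigl(\tau(x_0),x(\tau(x_0)\mid x_0)\bigr)\ge 0$, which I would do by a short case split on $\tau(x_0)$. If $\tau(x_0)<T$ then, $X_\partial$ being closed and $t\mapsto x(t\mid x_0)$ continuous, the infimum in~(\ref{eq:tau}) is attained, so $x(\tau(x_0)\mid x_0)\in X_\partial$ and the hypothesis $v\ge 0$ on $[0,T]\times X_\partial$ applies. If $\tau(x_0)=T$ then $x(t\mid x_0)\notin X_\partial$ for $t\in[0,T)$, i.e.\ $g_X(x(t\mid x_0))\ne 0$ there; since $g_X(x_0)>0$, continuity and the intermediate value theorem force $g_X(x(t\mid x_0))>0$, hence $x(t\mid x_0)\in X$, for all $t\in[0,T)$. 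Passing to the limit, $g_X(x(T\mid x_0))\ge 0$. If $g_X(x(T\mid x_0))=0$ then $x(T\mid x_0)\in X_\partial$ and we are again in the first situation; otherwise the whole trajectory remains in $X$ on $[0,T]$, so $x_0\notin X_0$ forces $x(T\mid x_0)\notin X_T$, i.e.\ $g_T(x(T\mid x_0))\le 0$, whence $x(T\mid x_0)\in X_T^c=\{g_X\ge 0,\ g_T\le 0\}$ and the hypothesis $v\ge 0$ on $\{T\}\times X_T^c$ applies. In every case $v\bigl(\tau(x_0),x(\tau(x_0)\mid x_0)\bigr)\ge 0$, so $v(0,x_0)\ge 0$ and therefore $w(x_0)\ge 1$.

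The monotonicity of $\phi$ and the terminal inequality are elementary; the one point requiring care — and the reason for the open/closed distinctions in the problem setup — is the case $\tau(x_0)=T$, where one must invoke the definition of $X_0^c$ together with the openness of $X$ to locate the terminal point $x(T\mid x_0)$ in $X_\partial\cup X_T^c$ rather than merely in $\bar X$. Note that, unlike Theorem~\ref{thm:1}, no superposition principle is needed here: we work with the genuine trajectory emanating from $x_0$.
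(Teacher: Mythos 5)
Your proof is correct and follows the same route as the paper's: both integrate (or equivalently, use the monotonicity of) $t\mapsto v(t,x(t\mid x_0))$ along the single trajectory up to the capped hitting time $\tau(x_0)$, then invoke the sign condition on $v$ at the endpoint and the bound $w\ge v(0,\cdot)+1$. The only difference is that you spell out the case analysis showing $(\tau(x_0),x(\tau(x_0)\mid x_0))\in([0,T]\times X_\partial)\cup(\{T\}\times X_T^c)$, which the paper asserts directly from the definition of $X_0^c$.
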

\begin{proof}
Take $x_0\in X_0^c$ and consider the first hitting time of $X_\partial$, $\tau:=\tau(x_0)$, defined by~(\ref{eq:tau}). By definition of $X_0^c$ the trajectory starting from $x_0$ will either hit $X_\partial$ or end in $X_T^c$. Therefore $x(\tau) \in ([0,T]\times X_\partial)\cup(\{T\}\times X_T^c)$ and $x(t)\in X$ for $t\in[0,\tau]$. Therefore $v(\tau,x(\tau))\ge 0$, $\mathcal{L}v(t,x(t))\le0$, $\forall\, t\in [0,\tau]$ and so
\[
  0 \le v(\tau,x(\tau)) = v(0,x_0) + \int_0^\tau \mathcal{L}v(t,x(t))\,d t \le v(0,x_0)\le w(x_0) -1.
\] 
\end{proof}

The following result is of key importance for subsequent developments.
\begin{theorem}\label{thm:noGap}
There is no duality gap between primal LP problems (\ref{rrlp}) on measures
and dual LP problem (\ref{vlp}) on functions, that is,
$p^*=d^*$.
\end{theorem}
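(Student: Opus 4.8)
\textit{Proof proposal.} The plan is to recognize the pair (\ref{rrlp})--(\ref{vlp}) as a primal--dual pair of infinite-dimensional linear programs in the standard sense (see, e.g.,~\cite{anderson}), and, as in the corresponding argument of~\cite{roa}, to invoke a no-gap theorem after checking its hypotheses. Concretely, I would first write (\ref{rrlp}) in the abstract form $\sup\,\langle c,\xi\rangle$ subject to $\mathcal{A}\xi=b$, $\xi\ge 0$, where $\xi:=(\mu_0,\mu,\mu_T^1,\mu_T^2,\hat\mu_0)$ ranges over the product of cones of nonnegative measures on the respective (fixed) compact sets, $\mathcal{A}$ is the linear map encoding the left-hand sides of Liouville's equation~(\ref{eq:Liouville}) and of $\mu_0+\hat\mu_0=\lambda$, the right-hand side $b$ is ``zero paired with $v\in C^1$, $\lambda$ paired with $w\in C(\bar{X})$'', and $\langle c,\xi\rangle=\int 1\,d\mu_0$. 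A short computation identifies the adjoint $\mathcal{A}^\ast$ acting on $(v,w)$ and shows that $\mathcal{A}^\ast(v,w)\ge c$ is precisely the list of inequalities in (\ref{vlp}), so (\ref{vlp}) is the abstract dual. Weak duality $p^\ast\le d^\ast$ is then immediate: for $\xi$ feasible in (\ref{rrlp}) and $(v,w)$ feasible in (\ref{vlp}), $\int 1\,d\mu_0=\langle c,\xi\rangle\le\langle\mathcal{A}^\ast(v,w),\xi\rangle=\langle(v,w),\mathcal{A}\xi\rangle=\langle(v,w),b\rangle=\int_X w\,d\lambda$. Note also that $(v,w)=(0,1)$ is dual feasible with finite objective $\lambda(X)$, so $d^\ast$ is finite and the dual is consistent.

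To rule out a strictly positive gap I would establish a priori mass bounds making the primal feasible set weak-$\ast$ compact. Testing $\mu_0+\hat\mu_0=\lambda$ against $w\equiv 1$ gives $\mu_0(\bar{X})+\hat\mu_0(\bar{X})=\lambda(X)<\infty$; testing Liouville's equation against $v\equiv 1$ gives $\mu_T^1([0,T]\times X_\partial)+\mu_T^2(X_T^c)=\mu_0(\bar{X})$; and testing it against $v(t,x)=T-t$, for which $\mathcal{L}v\equiv -1$ and $v(T,\cdot)=0$, yields $\mu([0,T]\times\bar{X})=T\,\mu_0(\bar{X})-\int(T-t)\,d\mu_T^1\le T\lambda(X)$. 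Thus all five measures are supported on fixed compact metric spaces and have total variation bounded by a fixed constant, while the two equality constraints are continuous for the weak-$\ast$ topology (they are tested against the separating families $C^1$ and $C(\bar{X})$, and $\mathcal{L}v\in C([0,T]\times\bar{X})$ since $v\in C^1$ and $f$ is polynomial). Hence the feasible set of (\ref{rrlp}) is weak-$\ast$ closed and bounded, therefore weak-$\ast$ compact by Banach--Alaoglu; this in particular re-proves the attainment asserted in Theorem~\ref{thm:1}.

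With these bounds in hand, the last step is to verify the closedness condition of the cited no-gap theorem: the relevant bounded slice of $\{(\mathcal{A}\xi,\langle c,\xi\rangle):\xi\ge 0\}$ is weak-$\ast$ closed, being the image of a weak-$\ast$ compact set under the weak-$\ast$-to-weak-$\ast$ continuous map $\xi\mapsto(\mathcal{A}\xi,\langle c,\xi\rangle)$; invoking the theorem then gives $p^\ast=d^\ast$. The main obstacle is bookkeeping rather than conceptual: one must match the compactness statement precisely to the hypotheses of the particular duality theorem used, and check that the weak-$\ast$ topologies on the (non-reflexive) measure spaces are exactly those for which $\mathcal{A}$, $\mathcal{A}^\ast$ and the objective are continuous --- which works here because each underlying set is a compact metric space, so the relevant $C(K)$ is separable and the unit ball of $M(K)$ is weak-$\ast$ compact and metrizable. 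I would also remark that the more ``hands-on'' alternative --- directly building near-optimal dual functions $(v_\varepsilon,w_\varepsilon)$ by mollifying the indicator of $X_0^c$ to show $d^\ast\le\lambda(X_0^c)$ --- is delicate precisely because $X_0^c$ is in general neither open nor closed (the subtleties flagged in the introduction), which is why the abstract compactness argument is preferable.
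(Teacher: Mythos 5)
Your proposal is correct and follows essentially the same route as the paper: both derive a priori mass bounds on all five measures by testing the equality constraints against suitable functions ($w\equiv 1$, $v\equiv 1$, and an affine-in-$t$ test function), conclude weak-$\ast$ boundedness and nonemptiness of the primal feasible set, and then invoke the Anderson--Nash no-gap theorem via Alaoglu's theorem and the weak-$\ast$ continuity/closedness of the relevant linear maps. Your write-up merely spells out a few steps (weak duality, identification of the adjoint, closedness of the image cone) that the paper's proof sketch leaves implicit by reference to \cite{roa}.
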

\begin{proof}
Here we only outline the basic steps; for a detailed argument in a similar setting see~\cite[Theorem~2]{roa}. Since the supports of all measures are compact, the initial measure is dominated by the Lebesgue measure and the final time is finite, we have $\mu_0(\bar{X})\le \lambda(\bar{X}) < \infty$, $\mu_T([0,T]\times \bar{X}) = \mu_0(\bar{X}) < \infty$ and $\mu([0,T]\times X) \le T\mu_T([0,T]\times \bar{X}) < \infty$, where the last two inequalities follow by plugging in $v(t,x) = 1$ and $v(t,x) = t$ in Liouvillel's equation~(\ref{eq:Liouville}). Therefore $p^* < \infty$ and the feasible set of problem~(\ref{rrlp}) is weakly-* bounded. Furthermore, the feasible set of~(\ref{rrlp}) is nonempty since $(\mu_0,\mu,\mu_T^1,\mu_T^2,\hat{\mu}_0) = (0,0,0,0,\lambda)$ is a trivial feasible point; therefore $0\le p^*<\infty$. The absence of a duality gap then follows from \cite[Theorem 3.10]{anderson} using
Alaoglu's theorem (see, e.g., \cite[Chapter 5]{luenberger}) and the weak-* continuity of the adjoint of the operator $\mathcal{L}$. 
\end{proof}

Next, we establish our first convergence\footnote{Please refer to \cite{roa} or, e.g., \cite{ash} for definitions of
the various types of convergence relevant in this context.} result.

\begin{theorem}\label{thm:2}
There is a sequence of feasible solutions to problem (\ref{vlp}) such that its\newline $w$-component converges from above to $I_{X_0}$ in $L^1$ norm and almost uniformly.
\end{theorem}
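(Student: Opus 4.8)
The plan is to build a sequence of dual-feasible functions $(v_k,w_k)$ whose $w$-components approximate $I_{X_0}$ from above. The key realization is that the $w$ in LP~(\ref{vlp}) is \emph{not} by itself the object converging to $I_{X_0}$: by Lemma~\ref{lem:v0} every feasible $w$ satisfies $w\ge 1$ on $X_0^c$, and Theorem~\ref{thm:1} together with no duality gap (Theorem~\ref{thm:noGap}) gives $\int_X w\,d\lambda \to \lambda(X_0^c)$ along a minimizing sequence. Thus $w_k$ converges in $L^1$ to $I_{X_0^c}$, not to $I_{X_0}$. To obtain an approximation of $I_{X_0}$ I would therefore pass to the complementary function $\tilde{w}_k := 1 - w_k$, which satisfies $\tilde{w}_k \le 0$ on $X_0^c$ and, since $\int_X w_k\,d\lambda \to \lambda(X_0^c)$ forces $w_k \to I_{X_0^c}$ from above in $L^1$, gives $\tilde{w}_k \to I_{X_0}$ from below. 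To recover convergence from \emph{above} to $I_{X_0}$ as the theorem demands, I would instead work with $1 - w_k$ shifted or, more robustly, with a sequence $w_k' := 1 - w_k + \epsilon_k$ for a suitable $\epsilon_k \downarrow 0$, chosen so that $w_k' \ge I_{X_0}$ pointwise while still $w_k' \to I_{X_0}$ in $L^1$ and almost uniformly; the feasibility bookkeeping is then transferred back to $(v_k,w_k)$ feasible in~(\ref{vlp}) whose $w$-component is $w_k = 1 - w_k' + \epsilon_k$.

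Concretely, first I would invoke Theorem~\ref{thm:noGap} to get a minimizing sequence $(v_k,w_k)$ feasible in~(\ref{vlp}) with $\int_X w_k\,d\lambda \to d^* = p^* = \lambda(X_0^c)$. Second, by Lemma~\ref{lem:v0}, each $w_k \ge 1$ on $X_0^c$, and the feasibility constraint $w_k \ge 0$ on $\bar{X}$ gives a uniform lower bound; combining $w_k \ge I_{X_0^c}$ with the convergence of integrals $\int_X w_k\,d\lambda \to \lambda(X_0^c) = \int_X I_{X_0^c}\,d\lambda$ yields $\int_X (w_k - I_{X_0^c})\,d\lambda \to 0$ with a nonnegative integrand, hence $w_k \to I_{X_0^c}$ in $L^1(X)$. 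Third, nonnegative $L^1$ convergence gives a subsequence converging almost everywhere; to upgrade to \emph{almost uniform} convergence (i.e.\ off a set of arbitrarily small measure) I would invoke Egorov's theorem, valid since $\lambda(X)<\infty$. Fourth, I would translate these statements about $w_k$ into statements about the $w$-component of the theorem by the complementation described above, so that the reported $w$-component equals $1-(w_k-1)=2-w_k$ or, more cleanly, a relabeling of the dual LP in which the objective minimizes $\int_X w\,d\lambda$ while $w$ itself overestimates $I_{X_0}$.

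The main obstacle is reconciling the direction of convergence with the wording of the theorem: the dual variable $w$ produced directly by~(\ref{vlp}) overestimates $I_{X_0^c}$, so it is $1-w$ that approximates $I_{X_0}$, and this approximation is naturally \emph{from below} (since $w \ge I_{X_0^c}$ forces $1-w \le I_{X_0}$). Producing a sequence approximating $I_{X_0}$ strictly \emph{from above} therefore requires either an additive relaxation $\epsilon_k \downarrow 0$ that inflates the approximant above $I_{X_0}$ without destroying $L^1$ convergence, or a reformulation of which function is designated the ``$w$-component.'' I expect the bulk of the technical care to lie exactly here: ensuring that after complementation and the $\epsilon_k$-shift the resulting function is genuinely feasible (or arises as the $w$-component of a feasible pair) and simultaneously dominates $I_{X_0}$ while converging to it in both $L^1$ and the almost-uniform sense. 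The $L^1$ and Egorov steps themselves are routine once the correct monotone approximant is identified.
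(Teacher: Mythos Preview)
Your instinct that something is off with the target function is correct: the statement as printed contains a typo. The $w$-component of any feasible pair satisfies $w\ge I_{X_0^c}$ on $X$ (by Lemma~\ref{lem:v0} and the constraint $w\ge 0$), and the paper itself uses Theorem~\ref{thm:2} in the proof of Theorem~\ref{thm:dualConvFun} precisely to conclude that ``$w_k$ and $\bar{w}_k$ converge from above to $I_{X_0^c}$.'' So the intended conclusion is convergence from above to $I_{X_0^c}$, not $I_{X_0}$; the passage to $I_{X_0}$ happens only later, via $1-w_k$, and from below. All of your effort on complementation and $\epsilon_k$-shifts to force an approximation of $I_{X_0}$ \emph{from above} while remaining feasible in~(\ref{vlp}) is therefore unnecessary and can be dropped.

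With the target corrected to $I_{X_0^c}$, your core argument is sound and in fact rather clean: take a minimizing sequence $(v_k,w_k)$ for~(\ref{vlp}); Theorem~\ref{thm:noGap} and Theorem~\ref{thm:1} give $\int_X w_k\,d\lambda\to\lambda(X_0^c)$; since $w_k\ge I_{X_0^c}$ the nonnegative integrand $w_k-I_{X_0^c}$ goes to zero in $L^1$; extract a subsequence converging a.e.\ and apply Egorov (using $\lambda(X)<\infty$) for almost uniform convergence. This is a genuinely different route from the paper, whose one-line proof defers to Theorem~3 of~\cite{roa}: there the sequence is built \emph{constructively} by mollifying an explicit (discontinuous) value-type function and perturbing by small constants to enforce the strict inequalities needed for feasibility. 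Your duality-based argument is shorter and avoids that construction entirely, at the cost of relying on the no-gap result Theorem~\ref{thm:noGap} as a black box; the constructive proof in~\cite{roa} is independent of duality and yields more explicit feasible pairs, which is what one ultimately needs when passing to polynomial (SOS) certificates in Theorem~\ref{thm:dualConvFun}.
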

\begin{proof}
	Follows by the same arguments as Theorem~3 in \cite{roa}.
\end{proof}

\section{LMI relaxations}
In this section we derive finite dimensional semidefinite programming (SDP) or linear matrix inequality (LMI) relaxations to the infinite dimensional LPs~(\ref{rrlp}) and (\ref{vlp}) and establish several convergence results relating these relaxations to the infinite dimensional LPs and to the ROA.

In what follows, $\mathbb{R}_k[\cdot]$ denotes the vector space of real multivariate polynomials of total degree less than or equal to $k$.

Derivation of the finite dimensional relaxations is standard and the reader is referred to~\cite[Section~5]{roa} or to the comprehensive reference~\cite{lasserre}; therefore we only highlight the main ideas. First of all, since the supports of all measures feasible in~(\ref{rrlp}) are compact, these measures are determined by their moments, i.e., by integrals of all monomials (which is a sequence of real numbers when indexed in, e.g., the canonical monomial basis). Therefore, it suffices to restrict the test functions $w(x)$ and $v(t,x)$ in (\ref{rrlp}) to all monomials, reducing the linear equality constraints of~(\ref{rrlp}) to linear equality constraints on the moments. Next, by the celebrated Putinar Positivstellensatz (see~\cite{lasserre,putinar}), the constraint that the support of a measure is included in a given compact basic semialgebraic set is equivalent to the feasibility of an infinite sequence of LMIs involving the so-called moment and localizing matrices, which are linear in the coefficients of the moment sequence. By truncating the moment sequence and taking only the moments corresponding to monomials of total degree less than or equal to $2k$ we obtain a necessary condition for this truncated moment sequence to be the first part of a moment sequence of a measure with the desired support.

This procedure leads to the primal SDP relaxation of order $k$
\begin{equation}\label{plmi}
\begin{array}{rcllll}
p^*_k & =  &\max & (y_0)_0 \\
&& \mathrm{s.t.} & A_k(y,y_0,y_T^1,y_T^2,\hat{y}_0) = b_k \\
&&& M_k(y) \succeq 0, & M_{k-{d_X}_i}({g_X},y)\succeq 0 \\
&&& M_k(y_0) \succeq 0, & M_{k-{d_X}}({g_X},y_0) \succeq 0\\
&&& M_k(y_T^1) \succeq 0, & M_{k-{d_T}}({g_X},y_T^1) \succeq 0 & M_{k-{d_T}}({-g_X},y_T^1) \succeq 0\\
&&& M_k(y_T^2) \succeq 0, & M_{k-{d_T}}({g_X},y_T^2) \succeq 0 & M_{k-{d_T}}({-g_T},y_T^2) \succeq 0\\
&&& M_k(\hat{y}_0) \succeq 0,  & M_{k-d_X}({g_X},\hat{y}_0) \succeq 0\\
&&&  M_{k-1}(t(T-t),y) \succeq 0, &  M_{k-1}(t(T-t),y_T^1) \succeq 0\\
\end{array}
\end{equation}
where the notation $\succeq 0$ stands for positive semidefinite
and the minimum is over moment sequences $(y, y_0, y_T^1,y_T^2,\hat{y}_0)$ truncated to degree $2k$ corresponding to measures $\mu$, $\mu_0$, $\mu_T^1$, $\mu_T^2$ and $\hat{\mu}_0$. The linear equality constraint captures the two linear equality constraints of~(\ref{rrlp}) with $v(t,x)\in\mathbb{R}_{2k}[t,x]$ and $w(x)\in\mathbb{R}_{2k}[x]$ being monomials of total degree less than or equal to $2k$. The matrices $M_k(\cdot)$ are the moment and localizing matrices, following the notations of \cite{lasserre} or \cite{roa}.
In problem~(\ref{plmi}), a linear objective is minimized subject to linear equality constraints and LMI constraints; therefore problem (\ref{plmi}) is an SDP problem.

The SDP problem dual to problem~(\ref{plmi}) turns out to be the sum-of-squares problem
\begin{equation}\label{dlmi}
\begin{array}{rcll}
d^*_k & = & \inf & w \cdot l \\\vspace{0.5mm}
&& \mathrm{s.t.} & -\mathcal{L}v(t,x)
= p(t,x) + q_1(t,x) t(T-t) +  q_2(t,x) {g_X}(x) \\ \vspace{1mm}
&&& w(x)-v(0,x)-1 = p_0(x) + {q_0}_1(x) {g_X}(x) \\\vspace{1mm}
&&& v(t,x) = {p_T}_1(x) + {q_T}_1(t,x) t(T-t) +  r(x) {g_X}(x)  \\
&&& v(T,x) = {p_T}_2(x) +  {q_T}_2(x) {g_X}(x) - {q_T}_3(x) {g_T}(x) \\
&&& w(x) = {s_0}(x) +  {s_1}(x) {g_X}(x),
\end{array}
\end{equation}
where $l$ is the vector of Lebesgue moments over $X$ indexed in the same basis in which the polynomial $w(x)$ with coefficients $w$ is expressed. The minimum is over polynomials $v(t,x)\in\mathbb{R}_{2k}[t,x],$ $w(x)\in\mathbb{R}_{2k}[x]$, over the polynomial $r(x)$ and polynomial sum-of-squares $p(t,x)$, $q_1(t,x)$, $q_2(t,x)$, ${q_0}_1(x)$, ${p_T}_1(x)$, ${p_T}_2(x)$, ${q_T}_1(x)$, ${q_T}_2(x)$, ${q_T}_3(x)$, $s_0(x)$, $s_1(x)$ of appropriate degrees. The constraints that polynomials are sum-of-squares can be written explicitly as LMI constraints~(see, e.g., \cite{lasserre}), and the objective is linear in the coefficients of the polynomial $w(x)$; therefore problem~(\ref{dlmi}) can be formulated as an SDP problem.

\begin{theorem}\label{lem:noGapRelax}
There is no duality gap between primal LMI problem (\ref{plmi}) and dual LMI problem (\ref{dlmi}),
i.e. $p^*_k = d^*_k$.
\end{theorem}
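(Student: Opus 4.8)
The plan is to establish strong duality between the two finite-dimensional SDPs by invoking the standard sufficient condition for conic programs, namely that one of the two problems is strictly feasible (Slater's condition) and has a finite optimum. Since both (\ref{plmi}) and (\ref{dlmi}) are genuine semidefinite programs and are formal duals of one another (obtained from the infinite-dimensional primal-dual pair (\ref{rrlp})--(\ref{vlp}) by the moment/SOS truncation), the absence of a duality gap follows once a constraint-qualification is verified on one side.

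First I would argue that the primal LMI problem (\ref{plmi}) has a finite optimal value: this is immediate because the truncated moment relaxation is an outer relaxation of the infinite-dimensional LP (\ref{rrlp}), so $p^*_k \ge p^* = \lambda(X_0^c)$, and moreover $p^*_k$ is bounded above because the admissible moment vectors are bounded (the measures live on compact sets and $\mu_0 \le \lambda$ forces $(y_0)_0 \le \lambda(\bar X) < \infty$, with the localizing matrix constraints propagating boundedness to all the truncated moments). Hence $0 \le p^*_k < \infty$. Next I would exhibit a strictly feasible point for the dual SOS problem (\ref{dlmi}): take $v$ to be a sufficiently large positive constant (or, if the support constraints on $X_\partial$ and $X_T^c$ require it, a strictly positive polynomial such as $v(t,x) = c$ for large $c>0$) so that $-\mathcal{L}v = 0$ admits the trivial SOS decomposition with strictly positive-definite Gram matrices after a small perturbation, and $w$ a large constant so that all the identities in (\ref{dlmi}) hold with the SOS multipliers strictly inside the PSD cone. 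Concretely, constant $v$ makes $\mathcal{L}v \equiv 0$, and choosing $w$ a big enough constant gives slack in $w - v(0,\cdot) - 1 \ge 0$ and in $w \ge 0$, so the relevant SOS certificates exist with positive-definite (hence interior) Gram matrices; the remaining identities defining $v(t,\cdot)$ and $v(T,\cdot)$ are satisfied with zero or strictly positive multipliers. This yields a Slater point for (\ref{dlmi}).

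Having a strictly feasible dual and a finite primal value (equivalently, a finite dual value since weak duality gives $d^*_k \ge p^*_k \ge 0$), I would then cite the standard conic duality theorem for SDP (e.g., the version in \cite{anderson} already invoked in the proof of Theorem~\ref{thm:noGap}, or any standard SDP reference) to conclude that strong duality holds and the common value is attained, so $p^*_k = d^*_k$. It is worth remarking explicitly that the pair (\ref{plmi})--(\ref{dlmi}) is constructed precisely as a dual pair — the equality constraint $A_k(\cdot) = b_k$ in (\ref{plmi}) is the adjoint of the polynomial identities in (\ref{dlmi}), and the PSD constraints on moment and localizing matrices are dual to the SOS multiplier cones — so the only thing that needs checking is the constraint qualification.

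The main obstacle is the careful verification of strict feasibility: one must make sure that a single choice of $(v,w)$ (constant functions being the natural candidate) simultaneously produces \emph{strictly} interior Gram matrices for \emph{all} the SOS multipliers $p, q_1, q_2, {q_0}_1, {p_T}_1, {p_T}_2, {q_T}_1, {q_T}_2, {q_T}_3, s_0, s_1$ appearing in (\ref{dlmi}), including the multipliers that are forced to have a prescribed (possibly zero) value by the structure of the identities. If constant $v$ and $w$ do not give strict feasibility everywhere — for instance because some identity pins a multiplier to the boundary of the PSD cone — the remedy is to perturb slightly or, symmetrically, to instead verify a Slater condition on the primal side by exhibiting a truncated moment sequence whose moment and localizing matrices are all positive definite (e.g. the moments of a smooth measure with strictly positive density on the relevant compact sets), which is always possible since those sets have nonempty interior; strong duality then follows from the primal Slater condition instead. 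Either route closes the argument; I would pick whichever side admits the cleaner strictly-feasible point to display.
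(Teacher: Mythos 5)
There is a genuine gap here, and it sits exactly where you flag it: the constraint qualification. The paper's own proof defers to Theorem~4 of \cite{roa}, which does \emph{not} use Slater's condition at all; it uses the criterion that the primal SDP feasible set is nonempty (the truncated moments of the trivial feasible point $(0,0,0,0,\lambda)$ work) and \emph{bounded} (the mass $(y_0)_0$ is bounded by $\lambda(\bar X)$, the supports are compact, and the moment/localizing matrix constraints then bound every entry of the truncated moment vectors), and then invokes the strong-duality result stating that a nonempty bounded primal SDP feasible region implies zero duality gap with dual attainment. That criterion is chosen precisely because strict feasibility fails structurally in this problem. On the primal side, your fallback route is impossible: the measure $\mu_T^1$ is supported on $X_\partial=\{g_X=0\}$, which is encoded by the pair of localizing constraints $M_{k-d_T}(g_X,y_T^1)\succeq 0$ and $M_{k-d_T}(-g_X,y_T^1)\succeq 0$; since the localizing matrix is linear in the polynomial, these two matrices sum to zero, so they can never be simultaneously positive definite, and no primal Slater point exists (also, a generic ``smooth positive density'' moment vector will not satisfy the Liouville equality constraints $A_k(\cdot)=b_k$).

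Your primary route (dual Slater) is not established either. With constant $v$ the first identity reads $0=p+q_1\,t(T-t)+q_2\,g_X$; at an interior point of $[0,T]\times X$ all three summands are nonnegative, so they all vanish there, which is incompatible with $p,q_1,q_2$ having positive definite Gram matrices (an SOS polynomial with a positive definite Gram matrix is strictly positive everywhere). The perturbations you sketch do not obviously repair this: the constant polynomial of degree $2k\ge 2$ has a \emph{unique} Gram matrix, which is singular, and subtracting $q_1 t(T-t)+q_2 g_X$ with strictly positive $q_1,q_2$ changes the high-degree behaviour of $p$ in a way that must be checked against membership in the \emph{interior} of the SOS cone of the prescribed degree. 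So the one claim your whole argument rests on is exactly the one left unproved. The fix is to abandon Slater and argue as the paper does: nonemptiness plus boundedness of the primal feasible set of~(\ref{plmi}) already guarantees $p_k^*=d_k^*$. (Your observations that $p_k^*\ge p^*\ge 0$ and that the moments are bounded are correct and are in fact the useful half of the standard argument.)
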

\begin{proof}
Follows by the same arguments based on standard SDP duality theory as Theorem~4 in~\cite{roa}.
\end{proof}

Now we prove our main convergence results.
\begin{theorem}\label{thm:dualConvFun}
Let $w_k \in {\mathbb R}_{2k}[x]$ denote the $w$-component of a solution to the dual LMI problem (\ref{dlmi}) and let $\bar{w}_k(x) =\min_{i\le k} w_i(x)$. Then $1-w_k$ converges from below to $I_{X_0}$ in $L^1$ norm and  $1-\bar{w}_k$ converges from below to $I_{X_0}$ in $L^1$ norm and almost uniformly.
\end{theorem}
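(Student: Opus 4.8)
The plan is to transfer convergence properties from the dual LP~(\ref{vlp}) to its finite-dimensional relaxation~(\ref{dlmi}) via Putinar's Positivstellensatz, and then to combine this with the lower-bound property guaranteed by Lemma~\ref{lem:v0} and the absence of duality gaps. First I would invoke Theorem~\ref{thm:2}: there is a sequence $(v^j, w^j)$ feasible in~(\ref{vlp}) whose $w$-components converge to $I_{X_0}$ from above both in $L^1$ and almost uniformly. The objective values $\int_X w^j \, d\lambda$ then converge to $\lambda(X_0)$, which by Theorems~\ref{thm:1} and~\ref{thm:noGap} equals $p^* = d^* = \lambda(X_0^c)$ — wait, more carefully: the dual optimum $d^*$ equals $\lambda(X_0^c)$, and $\int_X w \, d\lambda \to \lambda(X_0)$ would require $w \to I_{X_0}$, but the dual objective drives $w$ toward $I_{X_0^c}$; so in fact the minimizing sequence of~(\ref{vlp}) has $w \to I_{X_0^c}$ and $\int_X w\,d\lambda \to \lambda(X_0^c)$, consistent with Theorem~\ref{thm:2} as stated (one reads $I_{X_0}$ there as shorthand requiring the sign conventions of~(\ref{vlp}) to be tracked). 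I would simply fix the bookkeeping: the relevant feasible sequence for~(\ref{vlp}) has $w^j \downarrow I_{X_0^c}$, hence $1 - w^j \uparrow I_{X_0}$.

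The second step is to regularize such a feasible $(v, w)$ so that the nonnegativity constraints of~(\ref{vlp}) become \emph{strict} on the relevant compact sets, then perturb $w$ upward by a small constant and $v$ by a small polynomial so that all five inequality constraints hold strictly; strict feasibility on a compact basic semialgebraic set, combined with Putinar's Positivstellensatz, yields that $-\mathcal Lv$, $w - v(0,\cdot) - 1$, $v$ restricted to the boundary set, $v(T,\cdot)$ restricted to $X_T^c$, and $w$ all admit the weighted sum-of-squares certificates appearing in~(\ref{dlmi}), provided the relaxation order $k$ is large enough. This produces, for each element of the approximating sequence, a point feasible in~(\ref{dlmi}) for all sufficiently large $k$, with objective value arbitrarily close to $\lambda(X_0^c)$. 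Hence $d^*_k \to \lambda(X_0^c) = d^*$, and moreover the $w$-component $w_k$ of an (approximately) optimal solution satisfies $\int_X w_k \, d\lambda \to \lambda(X_0^c)$.

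Third, by Lemma~\ref{lem:v0} every feasible $w$ in~(\ref{vlp}) satisfies $w \ge 1$ on $X_0^c$, i.e. $1 - w \le 0 = I_{X_0}$ on $X_0^c$; and since the SOS certificate $w = s_0 + s_1 g_X$ forces $w \ge 0$ on $\bar X$, we get $1 - w_k \le 1 = I_{X_0}$ on $X_0$. Thus $1 - w_k \le I_{X_0}$ pointwise on $X$, so $\| I_{X_0} - (1-w_k)\|_{L^1} = \int_X (I_{X_0} - 1 + w_k)\, d\lambda = \lambda(X_0) - \lambda(X) + \int_X w_k\,d\lambda \to \lambda(X_0) - \lambda(X) + \lambda(X_0^c) = 0$, giving $L^1$ convergence of $1 - w_k$ to $I_{X_0}$ from below. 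For the pointwise-monotone minorant $\bar w_k = \min_{i \le k} w_i$: each $w_i$ already satisfies $1 - w_i \le I_{X_0}$, so $1 - \bar w_k = \max_{i\le k}(1 - w_i) \le I_{X_0}$ as well, the sequence $1 - \bar w_k$ is monotonically nondecreasing in $k$ and dominated by $I_{X_0}$, hence converges pointwise to some limit $h \le I_{X_0}$; since $\int_X(1 - \bar w_k) \ge \int_X(1 - w_k) \to \lambda(X_0)$ and $\int_X h \le \lambda(X_0)$, monotone convergence forces $h = I_{X_0}$ a.e., giving $L^1$ convergence; almost uniform convergence of the monotone bounded sequence $1 - \bar w_k$ to $I_{X_0}$ then follows from Egorov's theorem on the finite-measure set $X$.

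The main obstacle I expect is the second step — passing from a function feasible in the infinite-dimensional LP~(\ref{vlp}) to a polynomial feasible in the SOS relaxation~(\ref{dlmi}) at a \emph{finite} order $k$. This requires (i) a polynomial approximation of $(v,w)$ that preserves, in a uniform way, the sign conditions on each of the relevant compact sets — delicate here precisely because of the open/closed distinction emphasized in the introduction: the constraint $v(t,x) \ge 0$ lives on the closed set $[0,T]\times X_\partial$ where $g_X = 0$, so the localizing certificate $r(x) g_X(x)$ has $r$ of indeterminate sign, which is why the dual~(\ref{dlmi}) allows $r$ to be an arbitrary polynomial rather than SOS; and (ii) invoking Putinar's theorem uniformly, which needs the Archimedean condition for each of the semialgebraic sets $\bar X$, $[0,T]\times X_\partial$, $X_T^c$ — satisfied because $\bar X$ is compact and $t(T-t)$ bounds the time variable. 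This is exactly the argument carried out for the outer-approximation setting as Theorem~5 in~\cite{roa}, so I would state that the proof follows by those same arguments, adapted to the present splitting of the final measure and to the sign conventions above.
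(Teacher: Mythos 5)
Your proposal is correct and follows essentially the same route as the paper, which simply defers to the density-of-polynomials/Putinar argument of Theorem~5 in \cite{roa} applied to the feasible sequence from Theorem~\ref{thm:2}; your version fills in the details (strict-feasibility perturbation, the $L^1$ bookkeeping via $d_k^*\to d^*=\lambda(X_0^c)$, and monotone convergence plus Egorov for $\bar w_k$). You also correctly spot that the statement of Theorem~\ref{thm:2} should read $I_{X_0^c}$ rather than $I_{X_0}$ for the signs to be consistent with the dual objective and with the way Theorem~\ref{thm:dualConvFun} is used.
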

\begin{proof}
It follows from Theorem~\ref{thm:2} and from the density of polynomials in the space of continuous functions on compact sets (for a detailed argument in a similar setting see~\cite[Theorem~5]{roa}) that $w_k$ and $\bar{w}_k$ converge from above to $I_{X_0^c}$ in $L_1$ and almost uniformly on $X$, respectively. Therefore $1-w_k$ and $1-\bar{w}_k$ converge from below to $I_{X_0} = 1-I_{X_0^c}$ on $X$ in the same manner.
\end{proof}

The next Corollary follows immediately from Theorem~\ref{thm:dualConvFun}. 
\begin{corollary}\label{cor:pdconv}
The sequence of infima of LMI problems~(\ref{dlmi}) converges monotonically from above to the supremum of the LP problem~(\ref{vlp}), i.e., $d^*\le d_{k+1}^* \le d_k^*$ and $\lim_{k\to\infty} d_k^* = d^*$. Similarly, the sequence of maxima of LMI problems (\ref{plmi}) converges monotonically from above to the maximum of the LP problem (\ref{rrlp}), i.e., $p^* \le p^*_{k+1}\le p_k^*$ and $\lim_{k\to \infty} p^*_k = p^*$.
\end{corollary}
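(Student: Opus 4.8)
The plan is to prove the statement for the dual LMI family (\ref{dlmi}) first and then transfer it to the primal family (\ref{plmi}) via the absence of a relaxation gap. Three ingredients are needed on the dual side: monotonicity $d^*_{k+1}\le d^*_k$, the uniform lower bound $d^*\le d^*_k$, and an upper estimate forcing $\lim_{k\to\infty}d^*_k=d^*$.

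For monotonicity I would argue that the feasible set of (\ref{dlmi}) grows with $k$. Any tuple feasible at order $k$ consists of polynomials $v,w$ of total degree at most $2k$ together with sum-of-squares and polynomial multipliers of the prescribed (lower) degrees; all of these degree bounds are dominated by the corresponding order-$(k+1)$ bounds, so the same tuple --- with the extra high-degree coefficients set to zero --- is feasible at order $k+1$. Since the objective $w\cdot l=\int_X w\,d\lambda$ is a fixed linear functional and (\ref{dlmi}) is a minimization, enlarging the feasible set can only decrease the optimal value, whence $d^*_{k+1}\le d^*_k$.

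For the lower bound I would observe that (\ref{dlmi}) is a restriction of the infinite-dimensional LP (\ref{vlp}): every feasible tuple of (\ref{dlmi}) yields a pair $(v,w)\in C^1([0,T]\times\bar X)\times C(\bar X)$ feasible in (\ref{vlp}) with the same objective value, because the Putinar-type representations appearing in (\ref{dlmi}) are sufficient for the sign conditions of (\ref{vlp}) on the relevant sets --- using $t(T-t)\ge 0$ on $[0,T]$, $g_X\ge 0$ on $\bar X$, $g_X=0$ on $X_\partial$ (so the non-SOS multiplier $r$ is harmless there), and $g_X\ge 0$, $-g_T\ge 0$ on $X_T^c$. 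Hence $d^*_k\ge d^*$ for every $k$, and together with monotonicity the sequence $(d^*_k)$ is nonincreasing and bounded below by $d^*$, so it converges to some $L\ge d^*$.

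To see $L=d^*$ I would invoke Theorem~\ref{thm:dualConvFun}, whose $w_k$ is the $w$-component of a solution of (\ref{dlmi}) at order $k$, so $d^*_k\le\int_X w_k\,d\lambda$; since $w_k\to I_{X_0^c}$ in $L^1(X)$ and $\lambda(X)<\infty$, it follows that $\int_X w_k\,d\lambda\to\lambda(X_0^c)$, which equals $p^*=d^*$ by Theorems~\ref{thm:1} and~\ref{thm:noGap}. Thus $\limsup_k d^*_k\le d^*\le L$, forcing $L=d^*$. Finally Theorem~\ref{lem:noGapRelax} gives $p^*_k=d^*_k$ for all $k$, so from the dual conclusions together with $p^*=d^*$ we immediately read off $p^*\le p^*_{k+1}\le p^*_k$ and $\lim_{k\to\infty}p^*_k=p^*$. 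The only slightly delicate point is the degree bookkeeping behind the monotonicity step; everything else is a direct assembly of results already proved.
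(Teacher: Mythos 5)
Your argument is correct, and on the dual side it is essentially the paper's: monotonicity of $d^*_k$ from the nested feasible sets of (\ref{dlmi}), the bound $d^*_k\ge d^*$ because each Putinar-type certificate implies the pointwise sign conditions of (\ref{vlp}), and the limit $d^*_k\to d^*$ from Theorem~\ref{thm:dualConvFun} combined with $d^*=p^*=\lambda(X_0^c)$ (Theorems~\ref{thm:noGap} and~\ref{thm:1}). Where you genuinely diverge is the primal half: you transfer all primal conclusions from the dual ones through the zero-gap identity $p^*_k=d^*_k$ of Theorem~\ref{lem:noGapRelax}, whereas the paper uses only weak SDP duality $p^*_k\le d^*_k$ to conclude $\limsup_k p^*_k\le d^*=p^*$, and establishes $p^*_k\ge p^*$ and $p^*_{k+1}\le p^*_k$ directly from the structure of (\ref{plmi}): each truncated moment problem is a relaxation of (\ref{rrlp}), and its constraint set tightens as $k$ grows. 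Your route is shorter and is legitimate here since Theorem~\ref{lem:noGapRelax} precedes the corollary, but it leans on finite-level strong duality, which is the more delicate ingredient; the paper's route is slightly longer but needs only weak duality at each order $k$ and exhibits the primal monotonicity as an intrinsic property of the moment relaxations rather than as a consequence of the dual. Both assemblies are valid given what is already proved.
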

{\bf Proof:} Follows the proof of Corollary~1 in \cite{roa}. Monotone convergence of the dual optima $d_k^*$ follows immediately from Theorem~\ref{thm:dualConvFun} and from the fact that the higher the relaxation order $k$, the looser the constraint set of the minimization problem~(\ref{dlmi}). To prove convergence of the primal maxima observe that from weak SDP duality we have $d_k^* \ge p_k^*$ and from Theorems~\ref{thm:dualConvFun} and~\ref{thm:noGap} it follows that $d_k^* \to d^* = p^*$. In addition, clearly $p_k^* \ge p^*$ and $p_{k+1}^* \le p_k^*$ since the higher the relaxation order $k$, the tighter the constraint set of the maximization problem~(\ref{plmi}). Therefore $p_k^*\to p^*$ monotonically from above. $\Box$

Our last results establishes set-wise convergence of inner approximations to the ROA.

\begin{theorem}
Let $w_k \in {\mathbb R}_{2k}[x]$ denote the $w$-component of a solution to the dual LMI problem (\ref{dlmi})
and let ${X_0}_k := \{x \in X \: :\: w_k(x) < 1\}$. Then ${X_0}_k \subset X_0$,
\[
 \lim_{k\to\infty}\lambda(X_0\setminus X_{0k}) = 0 \quad\text{and}\quad \lambda( X_0\setminus  \cup_{k=1}^{\infty}{X_0}_k) = 0.
\]
\end{theorem}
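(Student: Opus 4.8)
The plan is to leverage Theorem~\ref{thm:dualConvFun}, which already gives us the analytic convergence of $1-w_k$ to $I_{X_0}$ from below in $L^1$ norm, and translate this into the desired set-wise statements. First I would establish the inclusion ${X_0}_k \subset X_0$: by Lemma~\ref{lem:v0}, any feasible $(v,w)$ of the dual LMI~(\ref{dlmi}) satisfies $w \ge 1$ on $X_0^c$, since the sum-of-squares constraints in~(\ref{dlmi}) are exactly a certificate that the inequalities $\mathcal{L}v \le 0$ on $[0,T]\times\bar X$, $v\ge 0$ on $([0,T]\times X_\partial)\cup(\{T\}\times X_T^c)$, and $w \ge v(0,\cdot)+1$ on $X$ hold. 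Hence $w_k(x) < 1$ forces $x \notin X_0^c$, i.e. $x \in X\setminus X_0^c = X_0$, giving ${X_0}_k \subset X_0$.

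Next I would handle $\lim_{k\to\infty}\lambda(X_0\setminus {X_0}_k) = 0$. The set $X_0\setminus{X_0}_k$ is $\{x\in X : x\in X_0,\ w_k(x)\ge 1\}$, which is contained in $\{x \in X : w_k(x) \ge 1\} \setminus X_0^c$, i.e. the set where $w_k$ overestimates $I_{X_0^c}$ by at least the full gap on $X_0$. Since $w_k \to I_{X_0^c}$ in $L^1(X)$ and $w_k \ge I_{X_0^c}$ (the latter from the inclusion argument above, which shows $w_k \ge 1 = I_{X_0^c}$ on $X_0^c$ and $w_k \ge 0 = I_{X_0^c}$ on $X_0$ by the SOS constraint $w = s_0 + s_1 g_X$), we have $\lambda(X_0 \setminus {X_0}_k) \le \lambda(\{x\in X_0 : w_k(x)\ge 1\}) \le \int_{X_0}(1 - (1-w_k))\,d\lambda \le \int_{X}|w_k - I_{X_0^c}|\,d\lambda \to 0$. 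Concretely: on $X_0$ we have $I_{X_0^c} = 0$, so $w_k \ge 1$ there contributes at least $1$ to $|w_k - I_{X_0^c}|$ pointwise, hence $\lambda(\{x\in X_0 : w_k(x)\ge 1\}) \le \|w_k - I_{X_0^c}\|_{L^1(X)} \to 0$.

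For the third claim, $\lambda(X_0 \setminus \cup_{k=1}^\infty {X_0}_k) = 0$, I would use the almost uniform convergence part of Theorem~\ref{thm:dualConvFun}, applied to $\bar w_k = \min_{i\le k} w_i$. Note $\{x \in X : \bar w_k(x) < 1\} = \cup_{i\le k}\{x\in X : w_i(x)<1\} = \cup_{i\le k}{X_0}_i$, so $\cup_{k=1}^\infty {X_0}_k = \{x\in X : \inf_i w_i(x) < 1\}$. Since $1 - \bar w_k \to I_{X_0}$ almost uniformly on $X$, for every $\varepsilon>0$ there is a set $K_\varepsilon \subset X$ with $\lambda(X\setminus K_\varepsilon) < \varepsilon$ on which $1-\bar w_k \to I_{X_0}$ uniformly; in particular for a.e. $x \in X_0 \cap K_\varepsilon$ we get $1 - \bar w_k(x) \to 1$, so $\bar w_k(x) < 1$ for $k$ large, hence $x \in \cup_k{X_0}_k$. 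Thus $X_0 \cap K_\varepsilon \subset \cup_k {X_0}_k$ up to a null set, and letting $\varepsilon \to 0$ yields $\lambda(X_0\setminus\cup_k{X_0}_k) = 0$. The only mild subtlety — and the step I would be most careful about — is bookkeeping the difference between ${X_0}_k$ defined via $w_k$ versus the cumulative minimum $\bar w_k$, and making sure the almost-uniform statement is invoked for the right sequence; this is entirely analogous to the corresponding argument for outer approximations in~\cite[Corollary~2 or Theorem~6]{roa}, so I would simply cite that for the routine details.
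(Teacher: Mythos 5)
Your proposal is correct and follows essentially the same route as the paper: the inclusion ${X_0}_k\subset X_0$ comes from Lemma~\ref{lem:v0} applied to the SOS certificates, and the measure statements come from the $L^1$ convergence $1-w_k\to I_{X_0}$ of Theorem~\ref{thm:dualConvFun} (your Chebyshev-type bound $\lambda(\{x\in X_0: w_k\ge 1\})\le \int_{X_0} w_k\,d\lambda$ is just a repackaging of the paper's pointwise squeeze $1-w_k\le I_{X_{0k}}\le I_{X_0}$ followed by integration). The only cosmetic difference is your detour through almost-uniform convergence of $\bar w_k$ for the third claim, which is valid but unnecessary, since $\lambda(X_0\setminus\cup_{k=1}^\infty X_{0k})\le\lambda(X_0\setminus X_{0j})\to 0$ already follows from the second claim.
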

\begin{proof}
Follows the proof of Theorem~6 in \cite{roa}. From Lemma~\ref{lem:v0} we have $w_k(x)\le 1 \Rightarrow x\in X_0$ for all $x\in X$, and therefore $I_{X_{0k}}\le I_{X_0} $. Since $w_k\ge 0$ on $X$ we also have $1-w_k\le I_{X_{0k}}$ on $X$ and therefore $1-w_k\le I_{X_{0k}}\le I_{X_{0}}$ on $X$. From Theorem~\ref{thm:dualConvFun}, we have $1-w_k \to I_{X_0}$ in $L^1$ norm on $X$. Consequently, 
\begin{align*}
\lambda(X_0) = \int_X I_{X_0}\,d\lambda &\: \:=\: \: \lim_{k\to\infty} \int_X 1-w_k\,d\lambda  \:\le\:  \lim_{k\to\infty} \int_X I_{X_{0k}}\,d\lambda   \:=\:  \lim_{k\to\infty}\lambda(X_{0k})\\ & \: \:\le\: \: \lim_{k\to\infty}\lambda(\cup_{i=1}^k X_{0i}) \: =\: \lambda(\cup_{k=1}^\infty X_{0k}). 
\end{align*}
But since $ X_{0k} \subset X_0 $ for all $k$, we must have
\[
 \lim_{k\to\infty}\lambda(X_{0k}) = \lambda(X_0)\quad \mathrm{and}\quad  \lambda(\cup_{k=1}^\infty X_{0k}) = \lambda(X_0),
\]
which proves the theorem.
\end{proof}

\section{Numerical examples}
In this section we present two numerical examples. The primal problems on measures were modeled using Gloptipoly~3~\cite{glopti} interfaced with the SDP solver SeDuMi~\cite{sedumi}; this solver also returns the solution to the dual SDP relaxation. In Section~\ref{sec:lowOrder} we then investigate how tight low order approximations can be obtained.

\subsection{Univariate cubic dynamics}

\begin{figure*}[th]
\begin{picture}(140,360)
\put(20,200){\includegraphics[width=70mm]{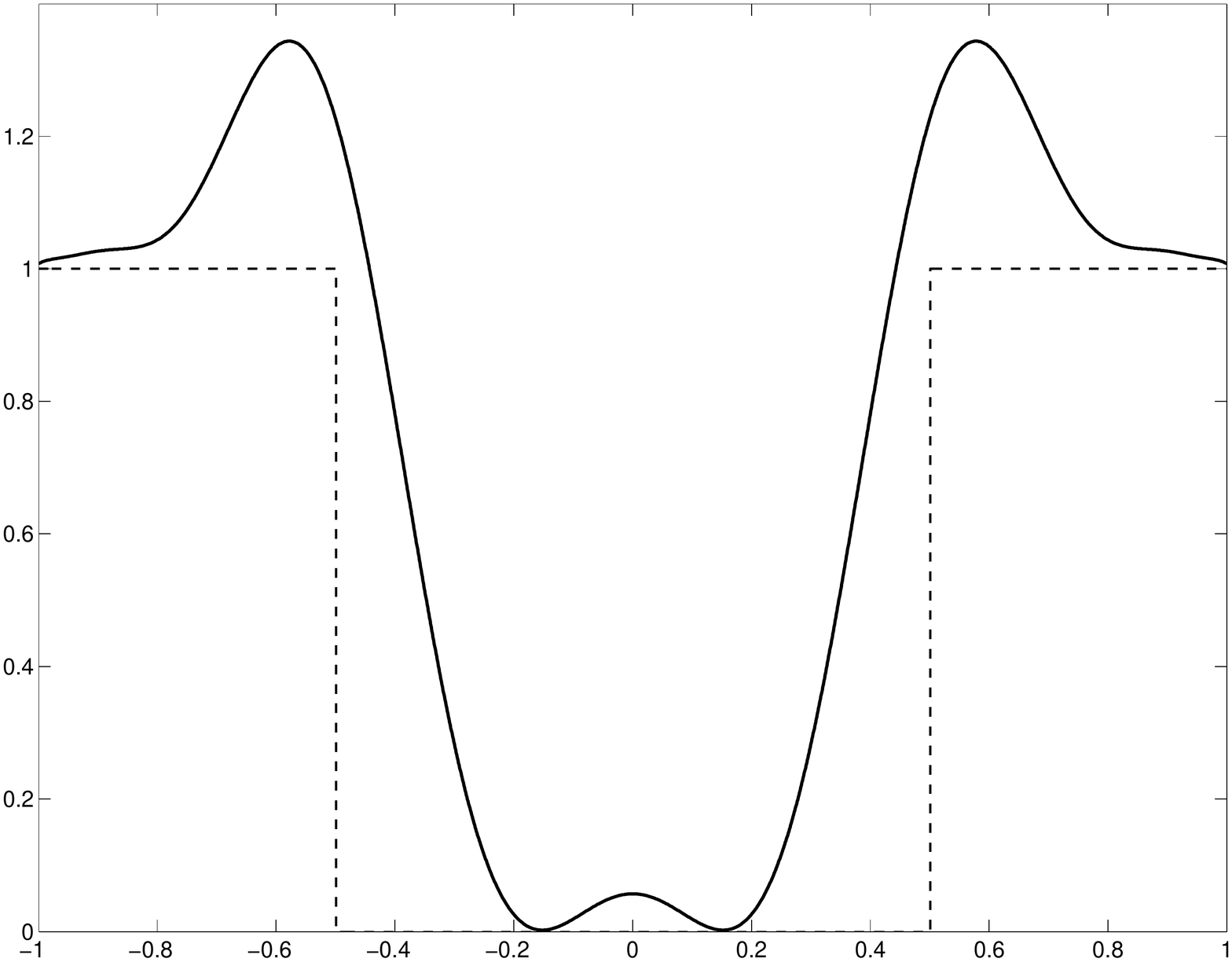}} 
\put(250,200){\includegraphics[width=70mm]{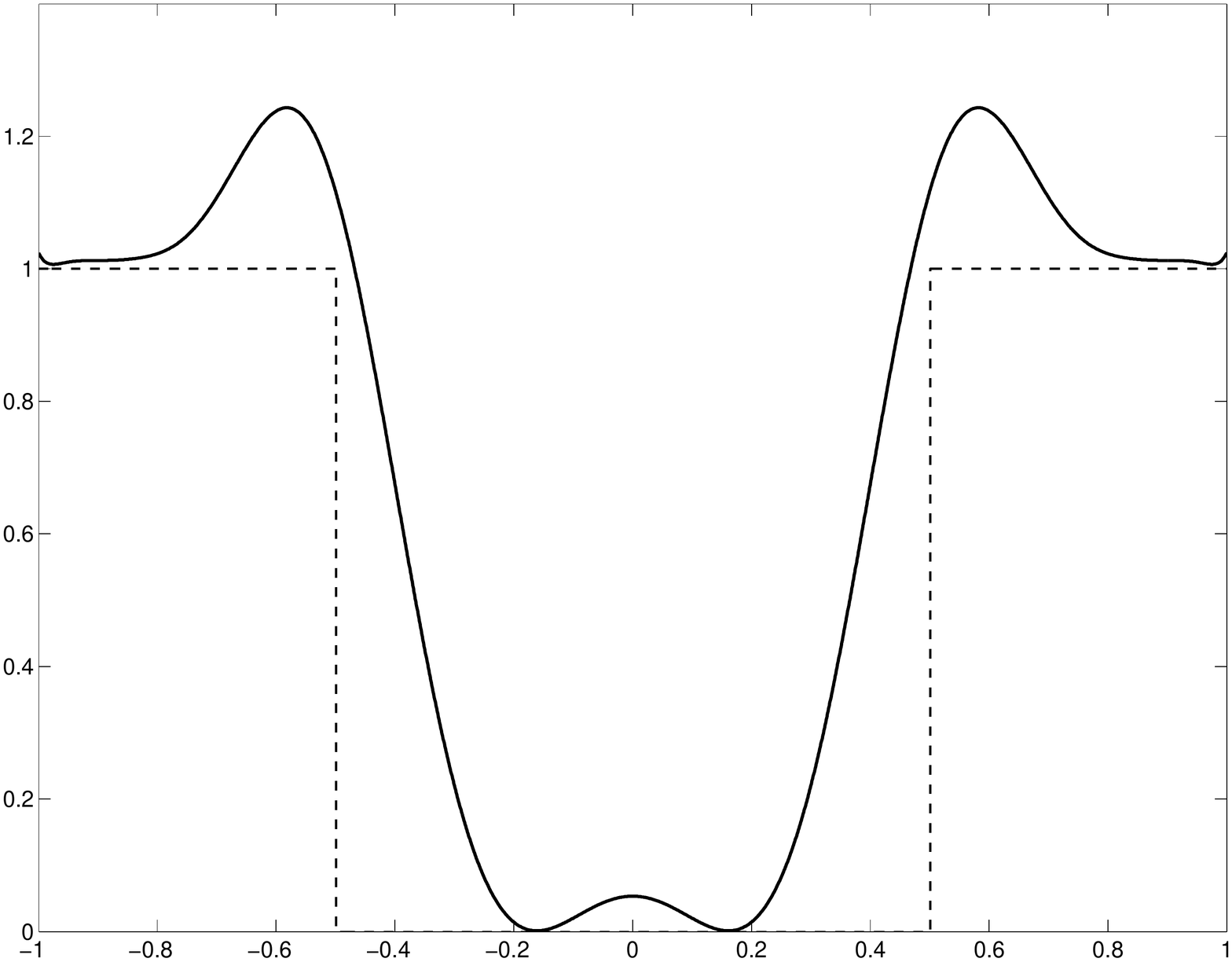}}
\put(20,20){\includegraphics[width=70mm]{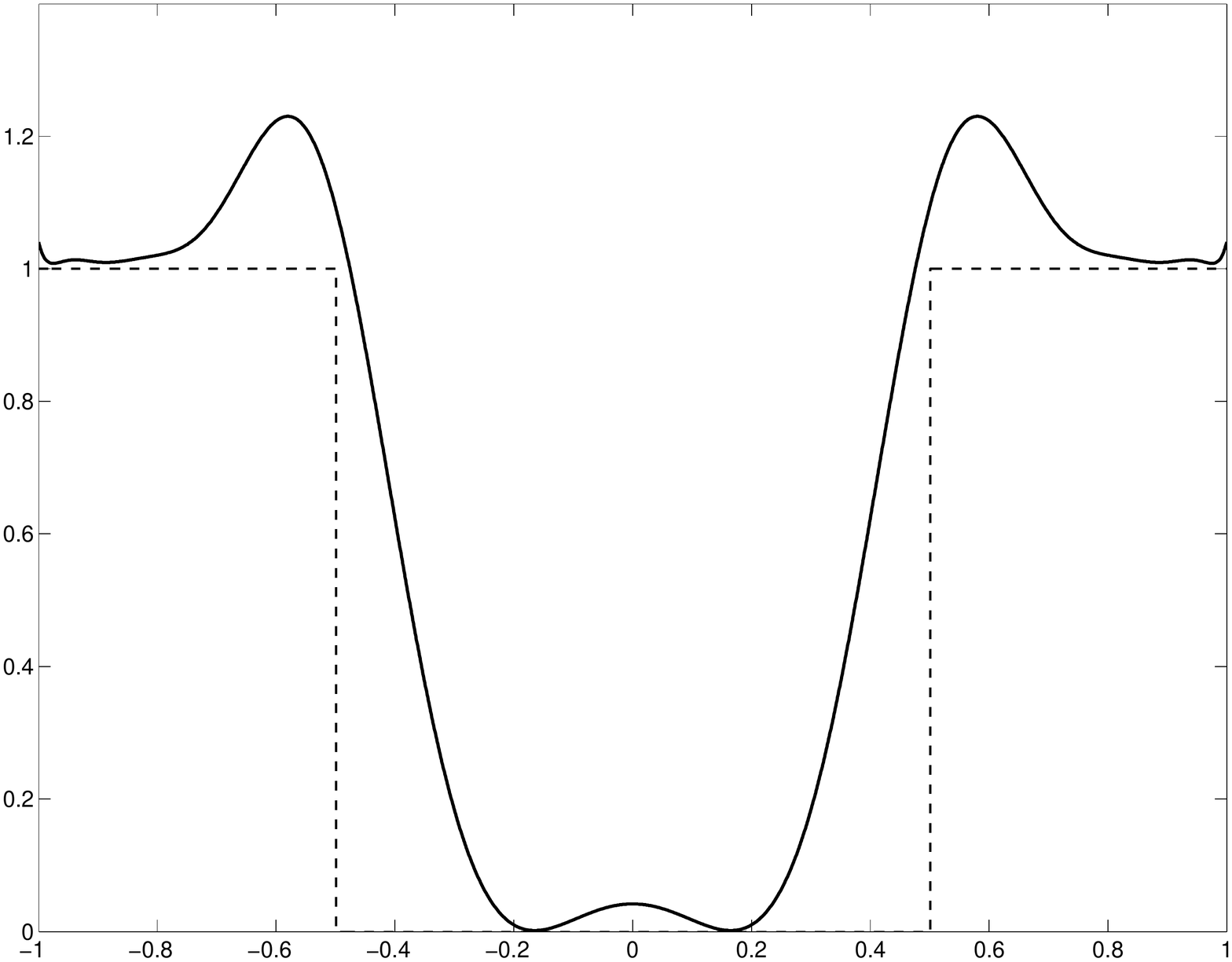}}
\put(250,20){\includegraphics[width=70mm]{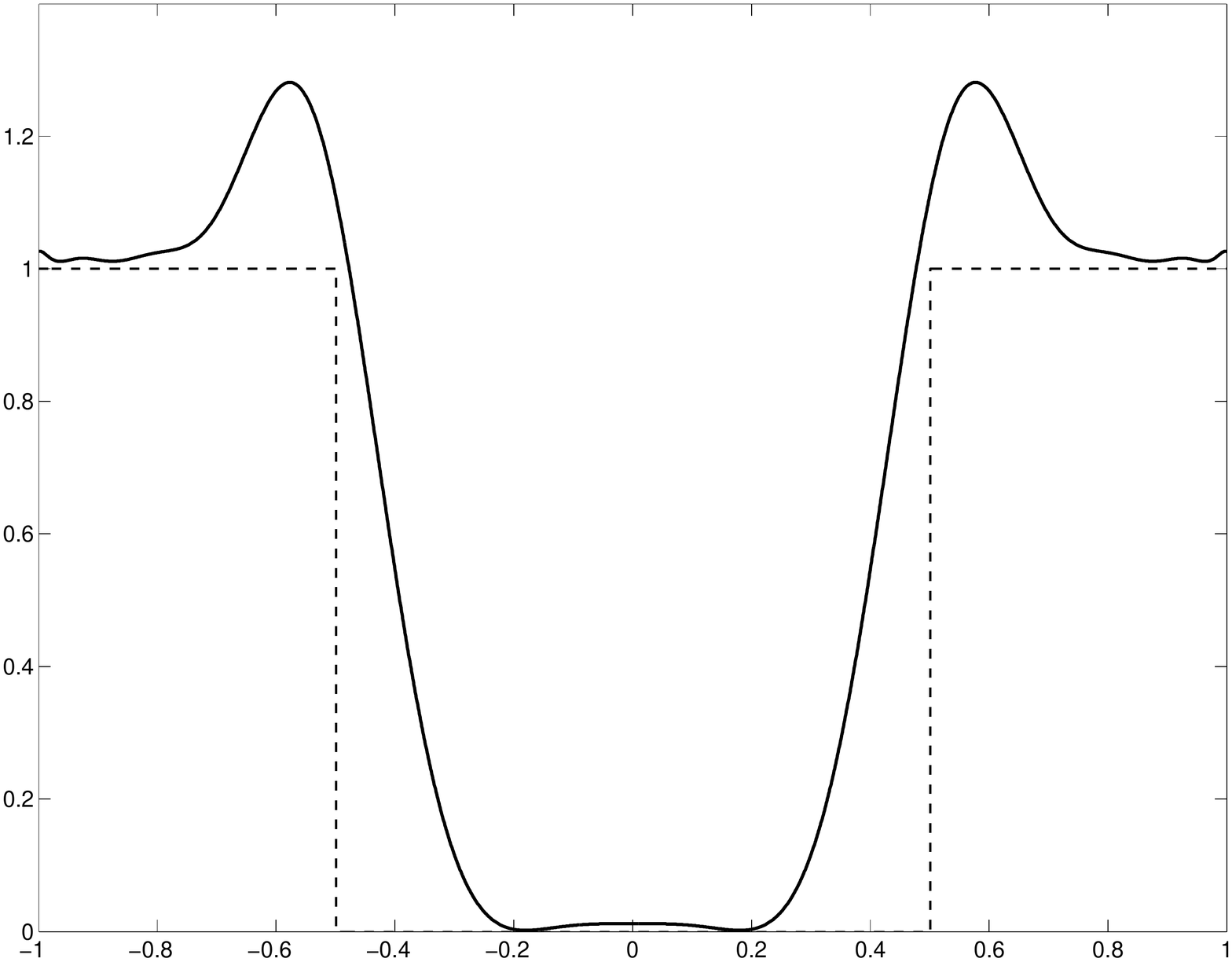}}

\put(120,190){\footnotesize $x$}
\put(350,190){\footnotesize $x$}
\put(120,10){\footnotesize $x$}
\put(350,10){\footnotesize $x$}

\put(110,330){\footnotesize $d = 16$}
\put(340,330){\footnotesize $d=20$}
\put(110,150){\footnotesize $d = 24$}
\put(340,150){\footnotesize $d=28$}

\end{picture}
\caption{Univariate cubic dynamics -- polynomial approximations (solid line) to the complement ROA indicator function $I_{X_0^c} =I_ {[-1,-0.5]}+I_ {[0.5,1]}$ (dashed line) for degrees $d\in\{16,20,24,28\}$.}
\label{fig:1}
\end{figure*}

Consider the system given by
\[\dot{x} = x(x-0.5)(x+0.5), \]
the constraint set $X = [-1,1]$, the final time $T = 10$ and the target set $X_T = [-0.3,0.3]$. The ROA in this setup is $X_0 = [-0.5,0.5]$. Polynomial approximations to the complement ROA for degrees $d\in\{16,20,24,28\}$ are shown in Figure~\ref{fig:1}. As expected, the functional convergence of the polynomial to the discontinuous indicator function is rather slow. A slightly better convergence is observed in the volume error of the sublevel set approximation to the ROA documented in Table~\ref{tab:1}. The relatively slow convergence could be significantly improved if a tighter constraint set $X$ was employed; see Section~\ref{sec:lowOrder} below. Alternative polynomial bases (e.g. Chebyshev polynomials) would also allow tighter higher order
approximations; see \cite{volume} for more details.

\begin{table}[ht]
\centering
\caption{\rm \small Univariate cubic dynamics -- relative error of the inner approximations to the ROA $X_0 = [-0.5,0.5]$ as a function of the approximating polynomial degree.}\vspace{1mm}
\begin{tabular}{c|cccc}\label{tab:1}
degree & 16 & 20 & 24 & 28 \\\hline
error & 11.4\,\% & 6.4\,\% & 4.84\,\% & 4.54\,\% 
\end{tabular}
\end{table}

\subsection{Van der Pol oscillator}

\begin{figure}[h]
  \centering
    \includegraphics[width=110mm]{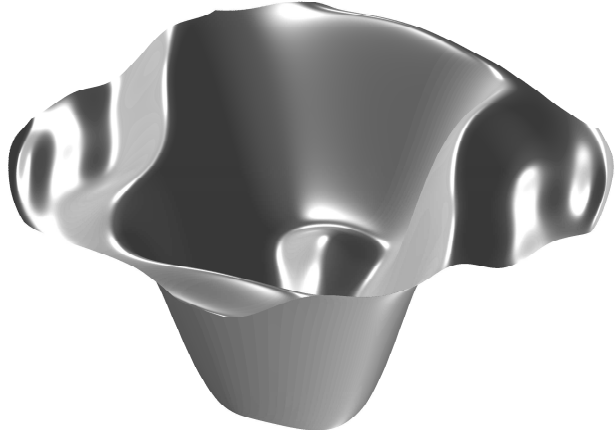}
    \vglue-11mm
    \caption{Van der Pol oscillator  -- degree 18 polynomial approximation to the indicator function of the complement ROA.}
    \label{fig:vp3D}
\end{figure}

\begin{figure*}[ht]
	\begin{picture}(140,360)
	\put(20,200){\includegraphics[width=70mm]{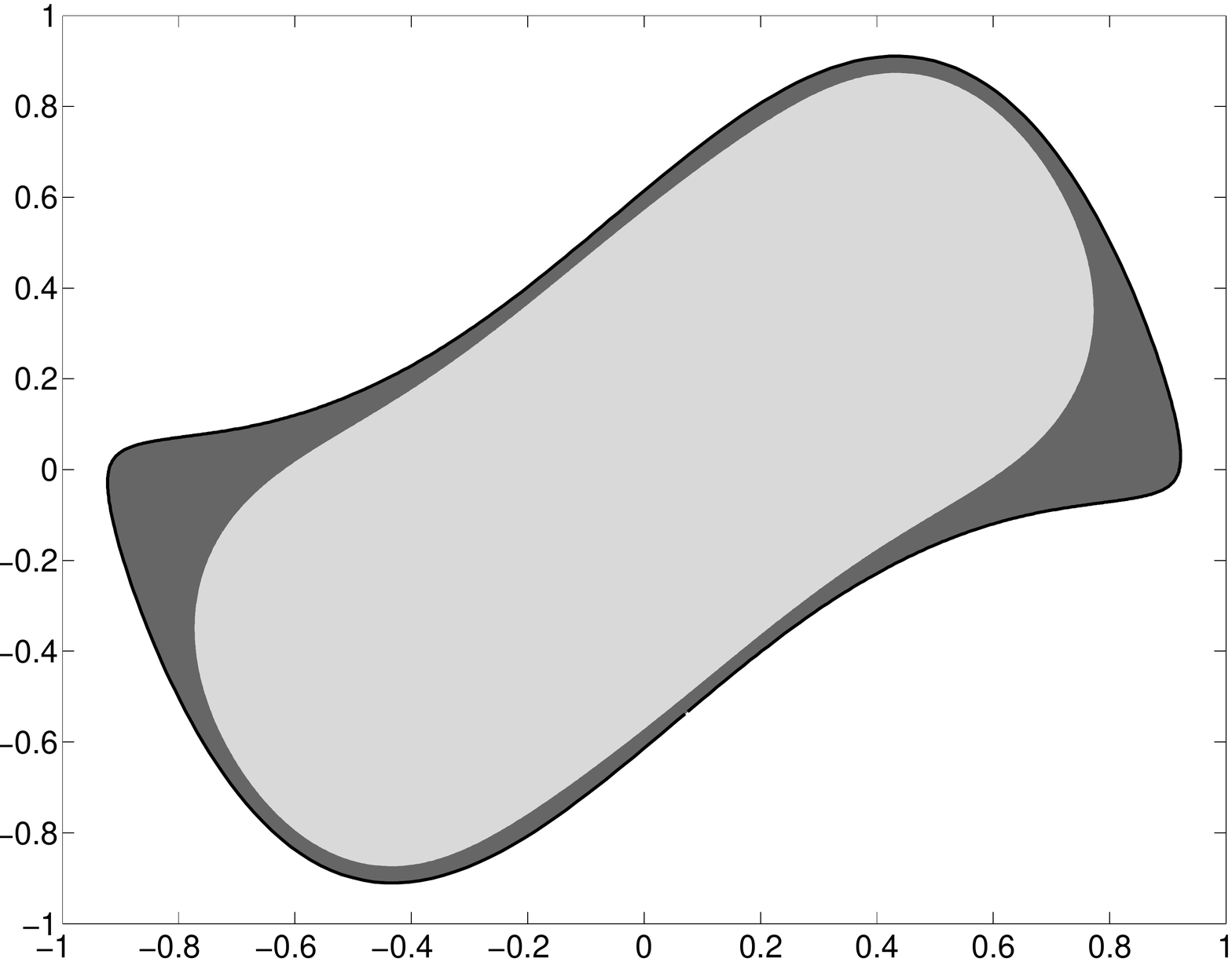}} 
	\put(250,200){\includegraphics[width=70mm]{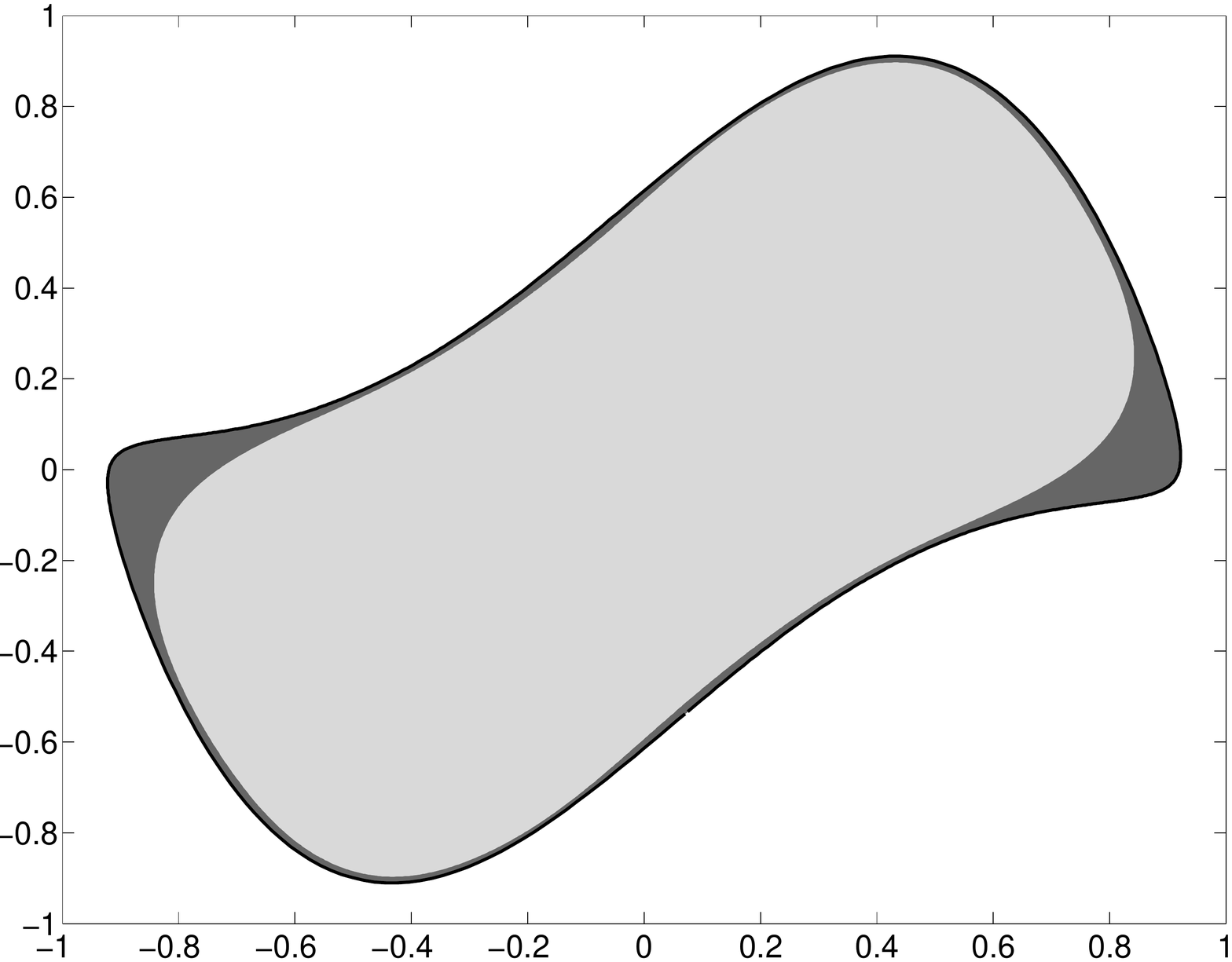}}
	\put(20,20){\includegraphics[width=70mm]{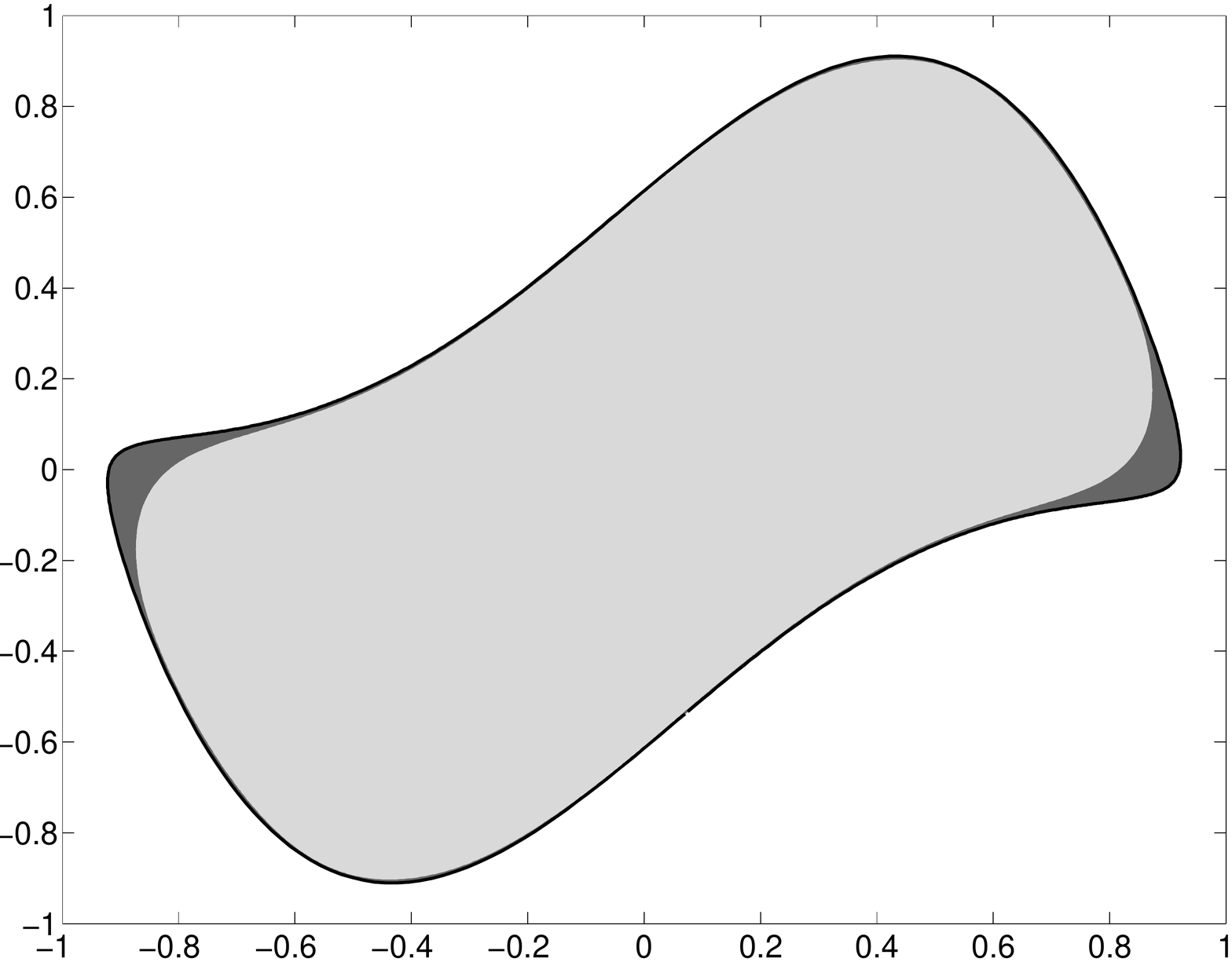}}
	\put(250,20){\includegraphics[width=70mm]{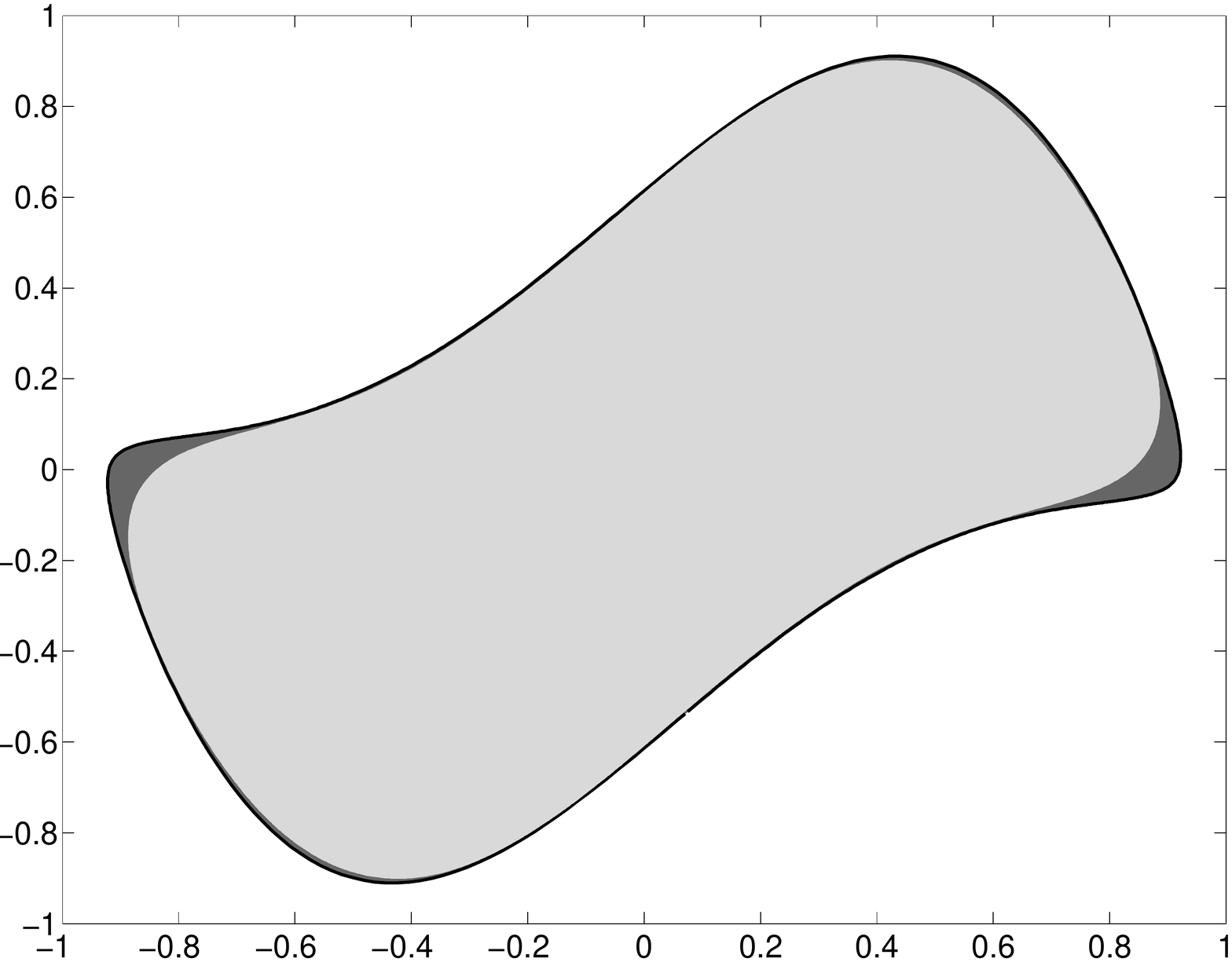}}

	\put(120,190){\footnotesize $x$}
	\put(350,190){\footnotesize $x$}
	\put(120,10){\footnotesize $x$}
	\put(350,10){\footnotesize $x$}

	\put(45,330){\footnotesize $d = 9$}
	\put(275,330){\footnotesize $d=12$}
	\put(45,150){\footnotesize $d = 15$}
	\put(275,150){\footnotesize $d=18$}
	\end{picture}
	\caption{Van der Pol oscillator -- polynomial inner approximations (light gray) to the ROA (dark gray) for degrees $d\in\{9,12,15,18\}$.}
	\label{fig:2}
\end{figure*}

\begin{figure}[!h]
 \centering
 \subfigure[Univariate cubic dynamics -- constraint set $X = {[}\!-\!0.7,0.7{]}$, $\mathrm{deg}\,w = 6$ ($\mathrm{deg}\,v = 16$).Volume approximation error 2.25\,\%.  ]{
  \includegraphics[width=70mm]{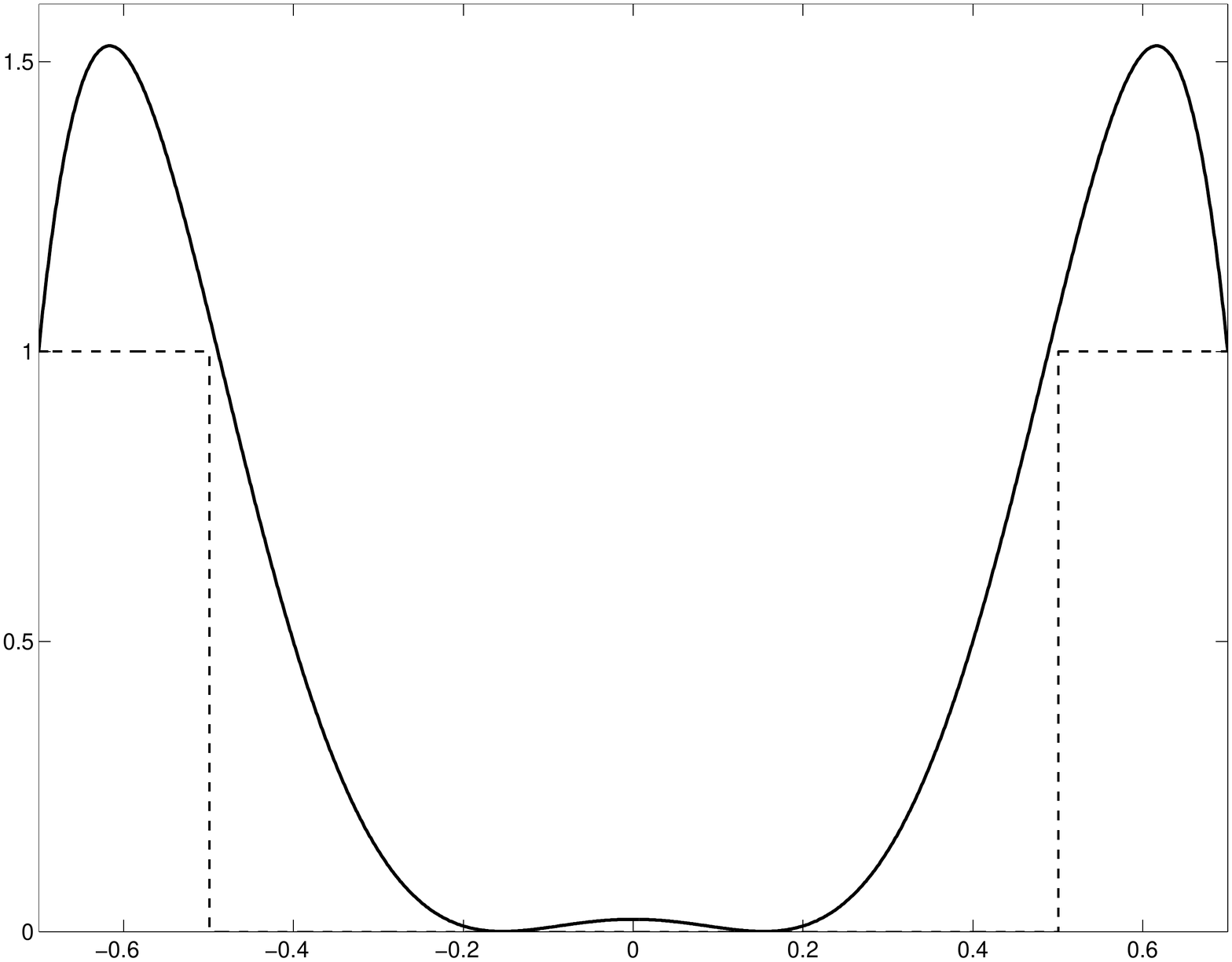}
   }\quad
 \subfigure[Van der Pol oscillator -- $\mathrm{deg}\,w = 8$ ($\mathrm{deg}\,v = 18$). Volume approximation error 5.46\,\%.]{
  \includegraphics[width=70mm]{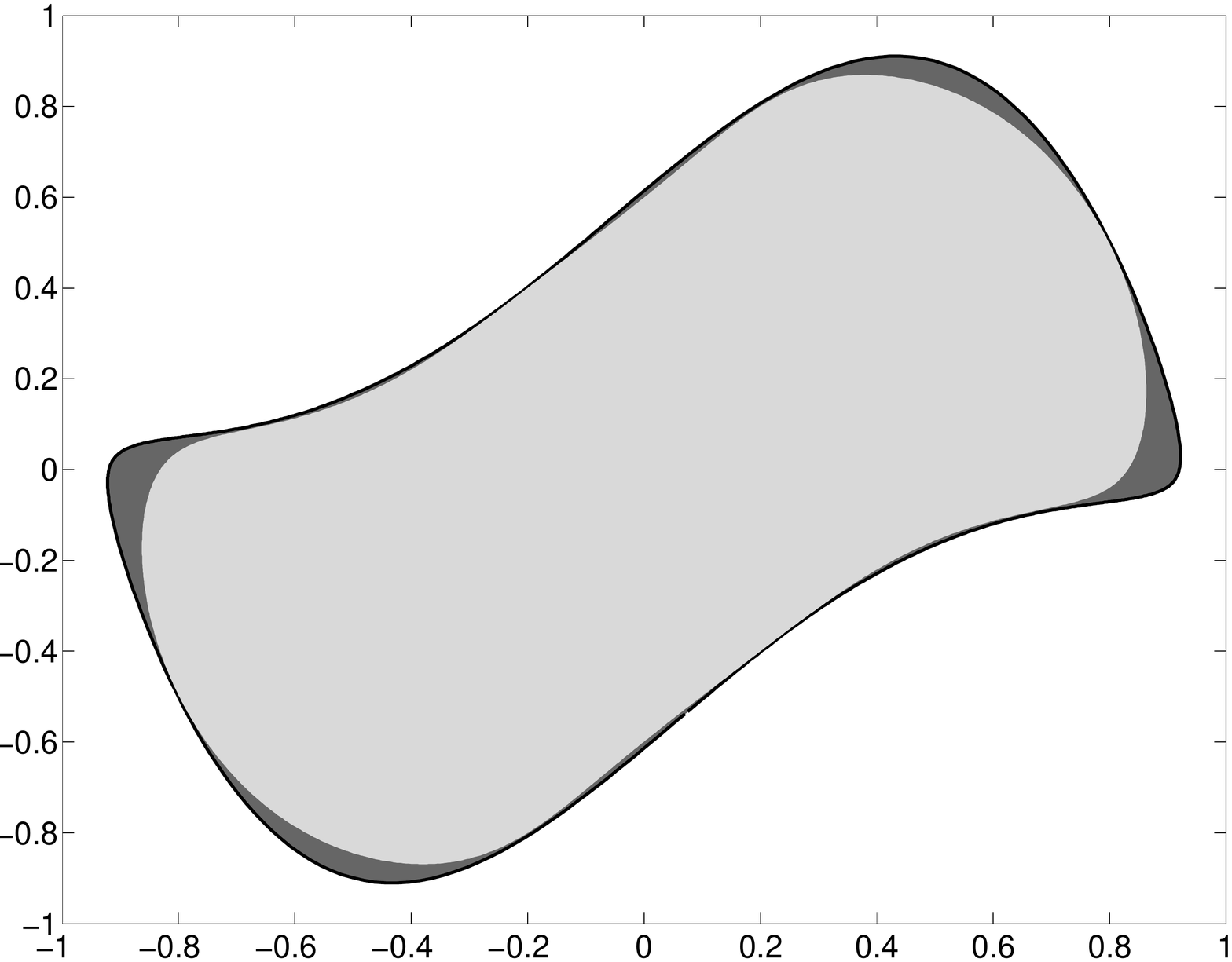}
   }
 \caption{Low order approximations to the ROA. Left: tighter constraint set $X$ and low order $w$ (compare with~Figure~\ref{fig:1}). Right: low order $w$ only (compare with~Figure~\ref{fig:2}).}
 \label{fig:lowOrde}
\end{figure}

As a second example consider a scaled\footnote{The coefficients were chosen so that the ROA
fits within the box $[-1,1]^2$.} version of the uncontrolled reversed-time Van~der~Pol oscillator given by
\begin{align*}
	\dot{x}_1 &= -2.0x_2,\\
	\dot{x}_2 &= 0.80x_1 + 10(x_1^2-0.21)x_2.
\end{align*}
We take $T=1$ and $X_T = \{x\in\mathbb{R}^n\: : \: \|x\|_2\le 0.50\}$ and $X:=\{x\in\mathbb{R}^n \::\: \| x\|\le 1.1\} $. The ROA is bounded, having the characteristic Van~der~Pol shape. Plots of polynomial sublevel set approximations of degrees $d\in \{9,12,15,18\}$ are shown in Figure~\ref{fig:2}. We observe a relatively fast convergence to the ROA, which is also documented by the relative volume errors reported in Table~\ref{tab:2}. Figure~\ref{fig:vp3D} then shows a degree 18 polynomial approximation to the indicator function of the complement ROA.

\begin{table}[!ht]
\centering
\caption{\rm \small Van der Pol oscillator -- relative error of the inner approximation to the ROA $X_0$ as a function of the approximating polynomial degree.} \vspace{1mm}
\begin{tabular}{c|cccc}\label{tab:2}
degree & 9 & 12 & 15 & 18 \\\hline
error & 18.3\,\% & 8.4\,\% & 3.8\,\% & 3.1\,\% 
\end{tabular}
\end{table}

\subsection{Low order approximations}\label{sec:lowOrder}
In the examples above, relatively high order polynomials had to be used to obtain tight approximations, which can limit subsequent applicability of the approximations. There are several ways to obtain low order approximations of similar quality. First of all, since the integral of a polynomial $w$ is minimized over the constraint set $X$, it is desirable that $X$ be a good outer approximation of the ROA. Of course, selecting $X$ is possible only if it is an artificially specified outer approximation of the ROA, not a constraint set coming from physical requirements on the system. More importantly, notice that in problem~(\ref{dlmi}) the system dynamics enters the constraints on the polynomial $v(t,x)$, whereas the polynomial $w(x)$ is only upper-bounding $v(t,x)+1$ for $t=0$. Since the inner approximations are given by sublevel sets of $w$, it is possible and plausible to choose different degrees of $w$ and $v$ -- low for $w$ and higher for $v$. Both techniques are illustrated in Figure~\ref{fig:lowOrde}; in Figure~\ref{fig:lowOrde}~(a) we consider the univariate cubic dynamics and we both shrink the constraint set $X$ and choose low order $w$ while keeping $v$ of higher order. In Figure~\ref{fig:lowOrde}~(b) we consider the Van Der Pol oscillator, keeping the  constraint set $X$ unchanged and only selecting low order $w$. The inner approximations obtained are indeed significantly tighter for the given degrees (compare with Figures~\ref{fig:1} and~\ref{fig:2}).

\section{Conclusion}\label{sec:conclusion}
This paper presented an infinite dimensional convex characterization of the region of attraction (ROA) for uncontrolled polynomial systems, following the approach initiated in our previous work \cite{roa}. Finite dimensional dual relaxations yield a converging sequence of inner approximations to the ROA, thereby complementing the outer approximations of \cite{roa}. One of the virtues of the approach is its conceptual simplicity -- the resulting approximation is the outcome of a single SDP or LMI problem with no free parameters except for the relaxation order. The approximations itself are also simple, given by sublevel sets of polynomials of predefined degrees.

Nevertheless, this approach does not escape the curse of dimensionality -- indeed, whereas the number of variables of the LMI relaxations grows polynomially with the relaxation order, this number grows exponentially with the state dimension. Tailored structure-exploiting SDP solvers could enable this approach to reach higher dimensions. In addition, a different choice of basis functions (e.g., Chebyshev polynomials rather than monomials) would improve numerical conditioning of the LMIs, allowing higher oder relaxations to be computed.

Future research directions include inner approximations in a controlled setting and the related problem of robust region of attraction / reachable set computation with either unknown but constant uncertainty or a time-varying disturbance. The cases of asymptotic region of attraction and maximum (controlled) positively invariant set computation are amenable to similar tools.

\end{document}